\definecolor{Red}{cmyk}{0,1,1,0}
\numberwithin{equation}{section}
\definecolor{darkcyan}{rgb}{0.0, 0.55, 0.55}
\newtheorem{teo}{Theorem}
\newtheorem{lema}{Lemma}
\newtheorem{proposition}{Proposition}
\newtheorem{afir}{Affirmation}
\newtheorem{remark}{Remark}
\newtheorem{example}{Example}
\newtheorem{defi}{Definition}
\newcommand{\bbn}{\mathbb{N}}
\newcommand{\al}{\alpha}
\newcommand{\g}{\gamma}
\newcommand{\vt}{\vartheta}
\def\Var{\mathop{\textrm{\rm Var}}\nolimits} 
\def\d{\mathop{\textrm{\rm d}}\nolimits} 
\def\exp{\mathop{\textrm{\rm exp}}\nolimits}
\begin{document}

\voffset=-1.5truecm\hsize=16.5truecm    \vsize=24.truecm
\baselineskip=14pt plus0.1pt minus0.1pt \parindent=12pt
\lineskip=4pt\lineskiplimit=0.1pt      \parskip=0.1pt plus1pt

\def\ds{\displaystyle}\def\st{\scriptstyle}\def\sst{\scriptscriptstyle}

\global\newcount\numsec\global\newcount\numfor
\gdef\profonditastruttura{\dp\strutbox}
\def\senondefinito#1{\expandafter\ifx\csname#1\endcsname\relax}
\def\SIA #1,#2,#3 {\senondefinito{#1#2}
\expandafter\xdef\csname #1#2\endcsname{#3} \else
\write16{???? il simbolo #2 e' gia' stato definito !!!!} \fi}
\def\etichetta(#1){(\veroparagrafo.\veraformula)
\SIA e,#1,(\veroparagrafo.\veraformula)
 \global\advance\numfor by 1
 \write16{ EQ \equ(#1) ha simbolo #1 }}
\def\etichettaa(#1){(A\veroparagrafo.\veraformula)
 \SIA e,#1,(A\veroparagrafo.\veraformula)
 \global\advance\numfor by 1\write16{ EQ \equ(#1) ha simbolo #1 }}
\def\BOZZA{\def\alato(##1){
 {\vtop to \profonditastruttura{\baselineskip
 \profonditastruttura\vss
 \rlap{\kern-\hsize\kern-1.2truecm{$\scriptstyle##1$}}}}}}
\def\alato(#1){}
\def\veroparagrafo{\number\numsec}\def\veraformula{\number\numfor}
\def\Eq(#1){\eqno{\etichetta(#1)\alato(#1)}}
\def\eq(#1){\etichetta(#1)\alato(#1)}
\def\Eqa(#1){\eqno{\etichettaa(#1)\alato(#1)}}
\def\eqa(#1){\etichettaa(#1)\alato(#1)}
\def\equ(#1){\senondefinito{e#1}$\clubsuit$#1\else\csname e#1\endcsname\fi}
\let\EQ=\Eq


\def\\{\noindent}
\def\v{\vskip.1cm}
\def\vv{\vskip.2cm}       
\newcommand\bfblue[1]{\textcolor{blue}{\textbf{#1}}}
\newcommand\blue[1]{\textcolor{blue}{}}
\newcommand\bfred[1]{\textcolor{red}{\textbf{#1}}}
\thispagestyle{empty}

\begin{center}
	{\LARGE\bf Existence of the zero temperature limit of equilibrium states on topologically transitive countable Markov shifts}
	\vskip.5cm
	Elmer R. Beltr\'{a}n$^{1}$, Jorge Littin$^{1}$, Cesar Maldonado$^{2}$ and Victor Vargas$^{3}$
	\vskip.3cm
	\begin{footnotesize}
		$^{1}$Universidad Cat\'olica del Norte, Departamento de Matem\'aticas, Avenida Angamos 0610, Antofagasta - Chile.\\
		$^{2}$ IPICYT / Divisi\'{o}n de Control y Sistemas Din\'{a}micos, Camino a la Presa San Jos\'{e} 2055, Lomas 4a. secci\'{o}n, San Luis Potos\'{i}, S.L.P. - Mexico.\\
		$^{3}$Centre for Mathematics of the University of Porto, Rua do Campo Alegre 687, Porto - Portugal.
	\end{footnotesize}
	\vskip.2cm
	\begin{scriptsize}
		emails: rusbert.unt@gmail.com; jlittin@ucn.cl; cesar.maldonado@ipicyt.edu.mx, vavargascu@gmail.com 
	\end{scriptsize}
\end{center}
\vskip1.0cm
\begin{quote}
{\small

\textbf{Abstract.} \begin{footnotesize} 
		Consider a topologically transitive countable Markov shift $\Sigma$ and a summable Markov potential $\phi$ with finite Gurevich pressure and $\mathrm{Var}_1(\phi) < \infty$. We prove existence of the limit $\lim_{t \to \infty} \mu_t$ in the weak$^\star$ topology, where $\mu_t$ is the unique equilibrium state associated to the potential $t\phi$. Besides that, we present examples where the limit at zero temperature exists for potentials satisfying more general conditions.
	
 \end{footnotesize}}
\end{quote}

{\footnotesize{\bf Keywords:} Countable Markov shift, equilibrium state,  maximizing measure, renewal shift, stationary Markov measure, topologically transitive.}

\section{Introduction}
Consider $X$ a metric space and let $(X,T)$ be a dynamical system, the thermodynamic formalism studies existence, uniqueness and properties of $T$-invariant probability measures that maximize the value $h(\mu)+\int\phi\d\mu$, where $h(\mu)$ is the metric entropy, those measures are widely known in the mathematical literature as equilibrium states. In this paper we consider $(X, T)$ as a countable Markov shift and $\phi\colon X \to\mathbb{R}$ as a continuous potential. Several properties about these observables were studied using the so called Ruelle operator and variational principles. In particular, in both, the context of finite Markov shifts and countable Markov shifts it were presented the notions of pressure, recurrence and transience in order to guarantee existence and uniqueness, see for instance \cite{BLL13}, \cite{Gu}, \cite{MU} and \cite{Sa1}. 

For any $t\geq 1$, we denote by $\mu_t$ the equilibrium state associated to the potential $t\phi$. An interesting problem in ergodic optimization is to study the weak$^\star$ accumulation points at infinity of the family $(\mu_t)_{t\geq 1}$. The foregoing, because those accumulation points, also called in the mathematical literature as ground states, usually result in maximizing measures for the potential $\phi$, i.e., those measures are the ones giving greater mass to the potential $\phi$ on the set of all the $T$-invariant probability measures. Besides that, the entropy of the  ground states usually satisfy interesting properties among the entropies of all the maximizing measures for the potential $\phi$. Actually, the fact that these accumulation points usually become maximizing measures for the system, shows an interesting connection between thermodynamic formalism and ergodic optimization. 

From the point of view of statistical mechanics, the equilibrium state $\mu_t$ describes the equilibrium of the system whose interactions are given by the potential $\phi$ at temperature $1/t$.  Thus, the existence of the accumulation points of the sequence $(\mu_t)_{t\geq 1}$ is associated with the freezing of the system. Because of that, the accumulation points when $t\to\infty$ are also known as zero temperature limits. A first study about limits at zero temperature in the setting of countable Markov shifts was developed by Z. Coelho in \cite{Coe90}. In fact, in that work, it were studied properties of the pressure associated to the potential $t\phi$ and a version of the central limit theorem in the context of aperiodic finite Markov shifts, also known as topologically mixing shifts of finite type.

When $(X, T)$ is a finite Markov shift, the existence of ground states follows as an immediate consequence of the compactness of the set of Borel probability measures on $X$. On the other hand, the uniqueness of the accumulation point at zero temperature was studied in the setting of locally constant potentials in \cite{Br}, \cite{CGU} and  \cite{Le} assuming transitivity on the dynamics. It is important to mention that J-R. Chazottes and M. Hochman in \cite{CH} exhibited an example where the existence of the limit at zero temperature of the equilibrium state fails when the potential is not locally constant. Another interesting example of existence of more than one accumulation point at zero temperature in the setting of the so called XY models was presented in \cite{vER}. When the alphabet is countable infinite, seminal works about existence of accumulation points at zero temperature were done considering the well known finitely irreducible condition, which is a strong assumption on the combinatorics of the shift $X$ that allowed to generalize several of the main results of the thermodynamic formalism in the countable Markov shifts context, see for instance \cite{JMU1, Ke, MU}. Actually, in \cite{JMU1} it was proved the existence of ground states in the setting of summable potentials. Moreover, the uniqueness was obtained in \cite{Ke} assuming the so called BIP condition on $X$, in the setting of locally constant potentials. Later, in \cite{FV} it was shown the existence of accumulation points at zero temperature under the hypothesis of transitivity. Also, in \cite{SV} it was proved the existence of weak$^\star$ accumulation points at zero temperature for a wider class of positive recurrent potentials defined on topologically transitive countable Markov shifts satisfying suitable conditions. On the other hand, in \cite{LV21} were presented conditions to guarantee existence of ground states in a wider class of linear dynamical systems defined on Banach spaces of infinite dimension.

In this paper we prove uniqueness of the accumulation point at infinity of the family of equilibrium states $(\mu_t)_{t\geq1}$ assuming that the potential $\phi$ has bounded variations and finite Gurevich pressure. The above, serve as a generalization of the results presented in \cite{Ke} to the case of topologically transitive countable Markov shifts. In order to do that, we show that the equilibrium states can be expressed as stationary Markov measures, see \cite{Gu} for details. In addition, we use an approximation of the topologically transitive countable Markov shift by finite Markov subshifts, in a similar way that appears in \cite{Ke}, with the aim of obtaining an approximation of the unique accumulation point at zero temperature of the family $(\mu_t)_{t\geq1}$ by the ones obtained in the setting of finite Markov subshifts. It is important to mention that we assume uniqueness of the accumulation point at zero temperature in the compact context, which was actually proved in \cite{Br}, \cite{CGU} and  \cite{Le}. Additionally, we present some examples where the uniqueness of the accumulation point is guaranteed assuming weaker conditions on the potential $\phi$.

The paper is organized as follows: In Section \ref{Prelim}, we introduce some definitions on thermodynamic formalism in the setting of countable Markov shifts and recall some previously known results. In Section \ref{ZeroTemp}, we study the accumulation points of the family of equilibrium states $(\mu_t)_{t\geq1}$ at infinity and we prove the existence of the zero temperature limit of equilibrium states on topologically transitive countable Markov shifts. Finally, in Section \ref{Examples}, we present two examples of the zero temperature limit of equilibrium state on the renewal shift. 

\section{Preliminaries}\label{Prelim}
Let $S$ be a countable set of {\it{states}} (when $|S|=\infty$ let us consider $S=\mathbb{N}$). Assume that $A=\left(A(i,j)\right)_{S\times S}$ is a square matrix of zeroes and ones with no columns or rows whose entries are all zeroes. Fix the set $\mathbb{N}_0 = \bbn\cup \{0\}$, the {\it{countable Markov shift}} is the set of all the sequences allowed by the matrix $A$, i.e.,
$$\Sigma:=\left\{x=(x_0x_1x_2\ldots)\in S^{\mathbb{N}_0}\colon A(x_i,x_{i+1})=1,~\forall i\geq0\right\},$$
equipped with the topology generated by the collection of {\it{cylinders}}
$$[x_0x_1\ldots x_{n-1}]:=\{y\in \Sigma\colon y_i=x_i,0\leq i\leq n-1\},$$
where $x_i\in S$, for every $0\leq i\leq n-1$ and the shift map (to be defined below) acting on it. The sigma-algebra considered on $\Sigma$ is the smallest one containing all the cylinders, i.e., the Borel $\sigma$-algebra. In the case that the set of states $S$ is finite, the shift $\Sigma$ is known as a finite Markov shift. A {\it path} of length $n$, denoted by $\underline{\gamma}=x_0x_1\ldots x_{n}$, is an element of $S^{n+1}$ satisfying $[x_0x_1\ldots x_{n}]\neq\emptyset$ and we say that the path $\underline{\gamma}$ passes from $x_0$ till $x_n$ through the states $x_1$, ..., $x_{n-1}$. The set of paths of length $n$ is denoted by $\mathcal{P}_n(\Sigma)$ and we denote the set of paths on $\Sigma$ by $\mathcal{P}(\Sigma):=\cup_{n\geq 1}\mathcal{P}_n(\Sigma)$. As usual, the function $\sigma:\Sigma\to\Sigma$ defined by $(\sigma x)_i=x_{i+1}$, for every $i\in\mathbb{N}_0$, is called the {\it{shift map}}.

The countable Markov shift $\Sigma$ is {\it{topologically transitive}} if for every $a,b\in S$ there is a path connecting $a$ and $b$, and it is {\it{topologically mixing}} if there exists $N\in\bbn$ such that there is a path of length $n$ connecting $a$ and $b$, for all $n\geq N$. Also, we say that $\Sigma$ satisfies the {\it{big images and preimages property (BIP)}} if there are $b_1,b_2,\ldots,b_N\in S$ such that, for all $a\in S$, there exists $1\leq i,j\leq N$ such that $A(a,b_i)=A(b_j,a)=1$.

Throughout the paper we call {\it potential} to a continuous function $\phi\colon\Sigma\to\mathbb{R}$ determining the interactions on the lattice (i.e. the one used to define the Ruelle operator). For each $n\geq 1$, we define $S_n\phi(x):=\sum_{i=0}^{n-1}\phi\left(\sigma^i(x)\right)$ as the {\it $n$-th ergodic sum} and the {\it $n$-th variation} of $\phi$ as
$$\Var_n(\phi):=\sup\{|\phi(x)-\phi(y)|\colon x,y\in\Sigma,~x_i=y_i,~0\leq i\leq n-1\}.$$
We say that $\phi$ has {\it bounded variations} if 
$$\overline{\Var}(\phi):=\sum_{n= 1}^{\infty}\Var_n(\phi)<\infty,$$
and has {summable variations} if
$\sum_{n= 2}^{\infty}\Var_n(\phi)<\infty$. Also, $\phi$ is a {\it summable potential} if it satisfies
$$\sum_{i\in\mathbb{N}}\exp\left(\sup\left(\phi|_{[i]}\right)\right)<\infty,$$
where $\sup\left(\phi|_{[i]}\right):=\sup\{\phi(x)\colon x\in[i]\}$, for every $i\in\mathbb{N}$. The so called summability condition becomes important here, because it allows to guarantee a suitable behavior of the Gurevich pressure, also allows to have a uniform control on the tails of the measures belonging to the family $(\mu_{t})_{t\geq1}$ and implies the existence of maximizing measures for the potential $\phi$ (see for instance \cite{BF,JMU1}).

Given a closed $\sigma$-invariant set $\Sigma' \subset \Sigma$,  $\mathcal{M}(\Sigma')$ denotes the set of Borel probability measures on $\Sigma'$, $\mathcal{M}_{\sigma}(\Sigma')$ the set of $\sigma$-invariant Borel probability measures on $\Sigma'$ and $\mathcal{M}_{erg}(\Sigma')$ the set of Borel ergodic probability measures on $\Sigma'$. For any $\mu \in \mathcal{M}(\Sigma')$ we use the following notation 
$$\mu(\phi):=\int_{\Sigma'}\phi\d\mu.$$

For every $\nu\in\mathcal{M}_{\sigma}(\Sigma)$, the {\it metric pressure} is defined by the following quantity 
\begin{equation}\label{metric-pressure} 
	P_\nu := h(\nu)+\nu(\phi),
\end{equation}
where $h(\nu)$ is the metric entropy associated to measure $\nu$ (see \cite{Sa1,Sa3}). The thermodynamic formalism studies the existence and properties of measures $\nu\in\mathcal{M}_{\sigma}(\Sigma)$ that maximize the value of the metric pressure defined in (\ref{metric-pressure}). Note that the sum at the right side of (\ref{metric-pressure}) is not always well defined, the foregoing, because the potential $\phi$ may not be $\nu$-integrable or it could even happen that $h(\nu)=+\infty$ and  $\nu(\phi)=-\infty$. By the above, the usual definition of {\it topological pressure} in the setting of countable Markov shifts is given by 
\begin{equation}\label{prin-var}
	P_{top}(\phi) := \sup\{h(\nu)+\nu(\phi)\colon\nu\in\mathcal{M}_{\sigma}\big(\Sigma\big)\mbox{ s.t. }-\nu(\phi)<\infty\}.
\end{equation}
\noindent
On the other hand, the {\it Gurevich pressure} of $\phi$ is defined by
\begin{equation}\label{Gur}
	P_{G}(\phi):=\limsup\limits_{n\rightarrow\infty}\frac{1}{n}\log Z_n(\phi,a),
\end{equation}
where $Z_n(\phi,a):=\sum_{\sigma^nx=x}\exp\left({S_n\phi(x)}\right)\mathbbm{1}_{[a]}(x)$. It is well known that $P_G(\phi)$ is independent of the choice of $a\in\mathbb{N}$ when the countable Markov shift $\Sigma$ is topologically transitive. Moreover, under the assumptions above $-\infty< P_G(\phi)\leq\infty$. Actually, O. Sarig in \cite{Sa1} showed  that for a topologically mixing countable Markov shift $\Sigma$  and a potential $\phi$ with summable variations and $\sup\phi<\infty$, the Gurevich pressure satisfies the {\it variational principle} and thus
\begin{equation}\label{principle-var} 
	P_G(\phi) = P_{top}(\phi).
\end{equation}
Furthermore, an analogous result in the setting of topologically transitive countable Markov shifts was presented in \cite{BS}.

A measure $\mu\in\mathcal{M}_{\sigma}\big(\Sigma\big)$ is an {\it equilibrium state} associated to the potential $\phi$ if the supremum of \eqref{prin-var} is attained for $\mu$, i.e., when it satisfies
\begin{equation}\label{equil-meas}
	P_\mu = h(\mu)+\mu(\phi) = P_{top}(\phi).
\end{equation}

We say that a potential $\phi$ is {\it recurrent} if $$\sum_{n\geq1}\exp\left({-nP_G(\phi)}\right)Z_n(\phi,a)=\infty.$$
For every $n\geq 1$ and $a\in S$, let
$$
Z_n^{*}(\phi,a):=\sum_{\sigma^nx=x}\exp\left({S_n\phi(x)}\right)\mathbbm{1}_{[\phi_a=n]}(x),
$$
where $\phi_a(x)=\mathbbm{1}_{[a]}(x)\inf\{n\geq 1:\sigma^nx\in[a]\}$ and $\inf \emptyset:=\infty$ (with $0\cdot\infty:=0$). Fix some $a\in S$, we say that a recurrent potential $\phi$ is {\it positive recurrent}, if  
$$\sum_{n\geq1}n\exp\left({-nP_G(\phi)}\right)Z_n^{*}(\phi,a)<\infty.$$
In case that the above series diverges, the potential $\phi$ is called {\it null recurrent}. It is important to point out that when $\Sigma$ is topologically transitive, all modes of recurrence defined above are independent of the choice of $a\in S$, we remit the reader to \cite{Sa3} for details. Also, when $|S|<\infty$ we have that any potential $\phi$ is positive recurrent. The positive recurrent potentials have an important role in the setting in which we are interested in, because they are the ones with an equilibrium state associated to them, as we describe below.

A well known tool in thermodynamic formalism useful to find equilibrium states is the so called {\it Ruelle operator}, which is defined in the setting of countable Markov shifts by the following equation
\begin{equation}\label{op-Ruelle}
	L_{\phi}f(x):=\sum_{\sigma(y)=x}\exp\left({\phi(y)}\right)f(y).
\end{equation} 
When $|S|<\infty$, the Markov shift $\Sigma$ is a compact metric space, the Ruelle operator is well defined on the space of functions $C(\Sigma)$ and we have the famous Ruelle's Perron-Frobenius Theorem, that guarantees the existence of a main eigenvalue with the associated eigenfunctions to the Ruelle operator and eigenmeasures for the dual of the Ruelle operator, see \cite{Bow75,Ru} for complete details.
In general, for countable Markov shifts with $|S|=\mathbb{N}$ the series in (\ref{op-Ruelle}) can be infinite. However, in \cite{Cyr,MU,Sa1,Sh} one can find different types of regularity that can be considered, both on the countable Markov shift $\Sigma$ and on the potential $\phi:\Sigma\to\mathbb{R}$, in order to have the Ruelle operator in (\ref{op-Ruelle}) well defined and obtain an analogous result to the so called Ruelle's Perron-Frobenius Theorem.  

For positive recurrent potentials $\phi$ with $P_G(\phi) < \infty$, O. Sarig showed that there exists a $\phi$-conformal sigma-finite measure $\nu$, that is, a finite Borel measure satisfying  
\begin{equation}\label{dual}	
	\nu\left(L_{\phi}f\right) = \exp(P_G(\phi))\nu(f),\quad \mbox{ for each }f\in L^1(\nu).
\end{equation}
Here the identity in (\ref{dual}) is denoted by $L_{\phi}^{*}\nu=\lambda\nu$ (see Theorem $4.9$ in \cite{Sa3}).

In the context of topologically transitive countable Markov shifts, was proved that for any potential $\phi$ bounded from above, with summable variations and finite Gurevich pressure, there is at most one equilibrium state and, in the case that the existence is guaranteed, the equilibrium state is given by $\d\mu = h\d\nu$, where $h$ is the main eigenfunction of $L_\phi$, i.e., $L_{\phi}h=\exp(P_G(\phi))h$, and $\nu$ is a sigma-finite measure such that $L_{\phi}^{*}\nu=\exp(P_G(\phi))\nu$ (for more details see Theorem $1.1$ and Theorem $1.2$ in \cite{BS})).

\begin{remark} The hypotheses that we assume throughout the paper in order to prove existence of the zero temperature limit are the following ones: we consider $\Sigma$ as a topologically transitive countable Markov shift and $\phi:\Sigma\to\mathbb{R}$ as a summable potential such that ${\overline{\Var}}(\phi)<\infty$ and $P_G(\phi)<\infty$. Under these hypothesis, Theorem $1.1$ from \cite{BS} assures that the Gurevich pressure satisfies the variational principle in (\ref{principle-var}). Moreover, by Theorem $1$ from \cite{FV}, we have that
	$$P_G(t\phi) = P_{top}(t\phi) = h(\mu_t)+t\mu_t(\phi),$$
	for every $t\geq1$, where $\mu_{t}$ is the unique equilibrium state associated to the potential $t\phi$. Besides that, it is also guaranteed the existence of accumulation points at infinity for the family $\left(\mu_{t}\right)_{t\geq1}$.
\end{remark}

Let us define 
\begin{equation}\label{maximizing-value}
	\alpha(\phi):=\sup\left\{\nu(\phi)\colon\nu\in\mathcal{M}_{\sigma}(\Sigma)\right\}.    
\end{equation}
A measure $\mu\in\mathcal{M}_{\sigma}(\Sigma)$ is called $\phi$-{\it maximizing} if $\alpha(\phi)=\mu(\phi)$. We denote by $\mathcal{M}_{max}(\phi)$ the set of $\phi$-maximizing measures.

In the setting of finite Markov shifts, it is widely known that existence of maximizing measures is a direct consequence of the compactness of the subshift. Nevertheless, in the non-compact approach, that is, when the alphabet $S$ is countable infinite, one requires additional conditions on the regularity of the potential.  Indeed, in Theorem $1$ of \cite{BF} the authors proposed conditions on the potential $\phi$ that guarantee the existence of such kind of measures. To be specific, they proved that any coercive potential with bounded variations has a maximizing measure supported on a finite Markov shift $\Sigma_I$, where $I\subset\mathbb{N}$ is a finite set such that $(\Sigma_I,\sigma)$ is a topologically transitive countable Markov shift. Actually, the class of potentials satisfying the so called coercive property considered in \cite{BF}, strictly contains the class of summable potentials. For instance, the potential $\phi(x) := -\log(x_0)$ is coercive but it is not a summable one.

Let $\mathrm{Per}_p(\Sigma)$ be the set of points $x\in\Sigma$ such that $\sigma^p(x)=x$ and consider $\mathrm{Per}(\Sigma):=\bigcup_{p\geq 1}\mathrm{Per}_p(\Sigma)$. For every $x\in\Sigma$, define $\alpha(\phi,x):=\limsup_{n\to\infty}\frac{1}{n}S_n\phi(x)$. So, when $x\in \mathrm{Per}(\Sigma)$, it follows that $\alpha(\phi,x) = \frac{1}{p}S_p\phi(x)$ where $p$ is the period of $x$ (that is, the minimum $p \in \mathbb{N}$ such that $\sigma^p(x)=x$). Denote by $\mathcal{M}_{per}(\Sigma)$ the set of periodic probability measures on $\Sigma$, i.e., the ones supported on periodic orbits. Since
$$
\mathcal{M}_{erg}(\Sigma)=\overline{\mathcal{M}_{per}(\Sigma)}\;.$$
By the ergodic decomposition theorem, we have $\alpha(\phi)=\sup\{\alpha(\phi,x)\colon x\in Per(\Sigma)\}$ (see for instance \cite{BF}). This last identity will be used later in the Example \ref{exem1}.

\section{Zero temperature limits on topologically transitive countable Markov shifts}\label{ZeroTemp}

As we already said in the previous section, for any $t\geq1$, there is a unique equilibrium state $\mu_{t}\in\mathcal{M}_{\sigma}(\Sigma)$ associated to the potential $t\phi$. Moreover, by \cite{FV}, the family of equilibrium states $\left(\mu_{t}\right)_{t\geq1}$ has weak$^\star$ accumulation points at $t\to\infty$. In addition, Theorem \ref{theo-main}, which is the main result of this paper, states that there is at most one of those accumulation points for $\left(\mu_{t}\right)_{t\geq1}$ when $\phi$ is a Markov potential, i.e., $\phi=\phi(x_0x_1)$. The statement of the result is the following:

\begin{teo}\label{theo-main}
	Let $\Sigma$ be a topologically transitive countable Markov shift and let $\phi\colon\Sigma\to\mathbb{R}$ be a summable Markov potential such that ${\Var}_1(\phi)<\infty$ and $P_G(\phi)<\infty$. Then, the limit $\lim_{t\to\infty}\mu_t$ exists in the weak$^\star$ topology, where $\mu_{t}\in\mathcal{M}_{\sigma}(\Sigma)$ is the unique equilibrium state associated to the potential $t\phi$, for  $t\geq1$. Furthermore, the limit measure $\mu_\infty$ is $\phi$-maximizing.
\end{teo}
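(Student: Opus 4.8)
The plan is to exploit the Markov structure of $\phi$ to reduce weak$^\star$ convergence to the convergence of countably many scalar quantities, and then to transport the known finite-alphabet zero-temperature result to $\Sigma$ through an exhaustion by transitive finite subshifts. First I would record the stationary Markov representation of $\mu_t$. Setting $M_t(i,j):=A(i,j)\exp(t\phi(i,j))$, both the eigenfunction $r_t$ of $L_{t\phi}$ and the associated conformal measure depend only on the first coordinate, so $r_t$ solves $\exp(P_G(t\phi))\,r_t(i)=\sum_{j}A(i,j)\exp(t\phi(i,j))\,r_t(j)$ and $\mu_t$ is the stationary Markov measure with transition probabilities
\[
P_t(i,j)=A(i,j)\,\frac{\exp(t\phi(i,j))\,r_t(j)}{\exp(P_G(t\phi))\,r_t(i)}
\]
and one-dimensional marginal $\pi_t(i)=\mu_t([i])$. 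Consequently $\mu_t([i_0\cdots i_n])=\pi_t(i_0)P_t(i_0,i_1)\cdots P_t(i_{n-1},i_n)$, so every cylinder measure is a finite product of the quantities $\pi_t$ and $P_t$. Since \cite{FV} already guarantees that $(\mu_t)_{t\ge1}$ is tight and has weak$^\star$ accumulation points, it suffices to prove that $\lim_{t\to\infty}\mu_t([i_0\cdots i_n])$ exists for every cylinder: two accumulation points agreeing on the generating $\pi$-system of cylinders, both being probability measures by tightness, must coincide.

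Next I would set up the finite approximation. Using topological transitivity, fix an exhaustion $\Sigma_1\subset\Sigma_2\subset\cdots$ of $\Sigma$ by topologically transitive finite Markov subshifts on state sets $I_N$ with $\bigcup_N I_N=S$. On the compact shift $\Sigma_N$ let $\mu_t^N$ be the equilibrium state of $t\phi|_{\Sigma_N}$; by the assumed uniqueness of the zero-temperature limit in the compact setting (\cite{Br,CGU,Le}) the limit $\mu_\infty^N:=\lim_{t\to\infty}\mu_t^N$ exists and is $\phi|_{\Sigma_N}$-maximizing. The goal is then to justify interchanging the two limits in $\lim_{t\to\infty}\lim_{N\to\infty}\mu_t^N$.

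The heart of the argument, and the step I expect to be the main obstacle, is a uniform-in-$t$ approximation estimate: for every $\varepsilon>0$ there is $N_0$ such that for all $N\ge N_0$ and all $t\ge1$ one has $\mu_t(\Sigma\setminus\Sigma_N)<\varepsilon$ and $|\mu_t([i_0\cdots i_n])-\mu_t^N([i_0\cdots i_n])|<\varepsilon$ for every cylinder contained in $\Sigma_N$. This is where summability of $\phi$ and the restriction $t\ge1$ are essential: since $\sum_i\exp(\sup\phi|_{[i]})<\infty$ forces $\sup\phi|_{[i]}\to-\infty$, for $i$ large we have $\sup\phi|_{[i]}<0$ and hence $t\sup\phi|_{[i]}\le\sup\phi|_{[i]}$ for all $t\ge1$; thus $\sum_{i\notin I_N}\exp(t\sup\phi|_{[i]})\le\sum_{i\notin I_N}\exp(\sup\phi|_{[i]})\to0$ uniformly in $t$. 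This dominates the tail contributions in the eigenvalue equation for $r_t$ and yields uniform tightness of $(\mu_t)$, together with continuity of the Perron eigendata under truncation that does not degenerate as $t\to\infty$. The delicate point is that as $t\to\infty$ the matrices $M_t$ become increasingly singular, concentrating on $\phi$-maximizing transitions; the estimate must survive this tropical limit, so one must compare the ratios $r_t(j)/r_t(i)$ with the corresponding finite-subshift ratios and bound the error by the summable tail above, uniformly in $t$.

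Granting the uniform estimate, the conclusion follows by a standard $3\varepsilon$ argument: given a cylinder $C\subset\Sigma_N$,
\[
|\mu_t(C)-\mu_s(C)|\le|\mu_t(C)-\mu_t^N(C)|+|\mu_t^N(C)-\mu_s^N(C)|+|\mu_s^N(C)-\mu_s(C)|,
\]
where the outer terms are $<\varepsilon$ uniformly in $t,s$ once $N$ is large, and the middle term is $<\varepsilon$ once $t,s$ are large because $\mu_t^N(C)\to\mu_\infty^N(C)$. Hence $\lim_{t\to\infty}\mu_t(C)$ exists for every cylinder, so $\mu_\infty:=\lim_{t\to\infty}\mu_t$ exists in the weak$^\star$ topology. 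Finally, that $\mu_\infty$ is $\phi$-maximizing follows because every weak$^\star$ accumulation point of $(\mu_t)$ is a ground state: using $P_G(t\phi)/t\to\alpha(\phi)$ together with the upper semicontinuity of $\nu\mapsto\nu(\phi)$ supplied by summability one gets $\mu_t(\phi)\to\alpha(\phi)$, and since $\mu_\infty$ is the unique accumulation point it is $\phi$-maximizing.
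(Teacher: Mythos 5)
Your overall architecture (stationary Markov representation of $\mu_t$, reduction to scalar quantities, transfer of the compact-case uniqueness of \cite{Br,CGU,Le} through finite transitive subshifts) matches the paper's strategy, but the proof has a genuine gap exactly at the step you yourself flag as ``the main obstacle'': the uniform-in-$t$ estimate $|\mu_t(C)-\mu_t^N(C)|<\varepsilon$ for all $t\ge1$ is asserted, not proved, and it is the entire technical content of the theorem. The summability bound $\sum_{i\notin I_N}\exp(t\sup\phi|_{[i]})\le\sum_{i\notin I_N}\exp(\sup\phi|_{[i]})$ controls tails of the partition-function type, but the quantity you actually need to control is the tail of the eigenvalue equation weighted by the eigenfunction values $r_t(j)$, and these ratios $r_t(j)/r_t(i)$ can grow or decay exponentially in $t$. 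Nothing in your sketch rules out that the truncation error, while small for each fixed $t$, fails to be small uniformly as $t\to\infty$; the interchange of limits $\lim_t\lim_N$ is precisely what cannot be taken for granted here.

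The paper resolves this differently, and the difference is instructive. It does not use an exhaustion $\Sigma_N\uparrow\Sigma$ with a uniform additive bound on all cylinders. Instead it first reduces the problem to a finite-dimensional one: since every accumulation point is a maximizing measure supported on $\Sigma_I$ with $I$ finite (Theorem 1 of \cite{BF}), it suffices to prove convergence of the finitely many ratios $\mu_t([b])/\mu_t([a])$, $a,b\in I$. Via the loop decomposition these equal $(1-p_{aa}^t)/(1-p_{bb}^t)$, and the comparison with a finite subshift is carried out for a \emph{single, carefully calibrated} subshift $\Sigma'=\Sigma_{-7c}$, where $c$ is built from the constants $C$, $N$ of Proposition \ref{claim} and the gap $d$ of Remark \ref{d}. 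The calibration matters: the contribution of paths leaving $\Sigma'$ is shown to be $O(\exp(-7Ct))$ (Lemma \ref{af23}), which must be compared against the \emph{lower} bound $1-p_{aa}^t\ge\exp(-tC-NP_G(t\phi))$ of \eqref{2beta}; only because $7C$ beats $C$ (plus the bounded pressure term) does one get the asymptotic equivalence $1-p_{aa}^t\sim 1-q_{aa}^t$, and the conclusion is a ratio asymptotic as $t\to\infty$, not a uniform additive estimate. A generic member of an exhaustion would not come with this calibration, so your $3\varepsilon$ argument has no justification as written. To repair your proof you would essentially have to reproduce the paper's construction of $\Sigma_{-7c}$ and the estimates of Lemmas \ref{af1}--\ref{lemma1-Kem}.
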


The main idea to prove the above theorem is to obtain an approximation of the weak$^\star$ accumulation points at $\infty$ of the family $(\mu_t)_{t\geq1}$ by the unique accumulation point at $\infty$ of a family of equilibrium states $(\vt_{t})_{t\geq1}$ defined on a suitable finite Markov shift $\Sigma'$, which guarantees uniqueness in the non-compact context. Here we use a similar technique to the one in \cite{Ke} guaranteeing that in our setting any equilibrium state can be expressed as a stationary Markov measure and using that expression to obtain the desired approximation.

Our first goal here is to characterize the behavior of those accumulation points and the asymptotic behavior of the map $t \mapsto P_G(t\phi)$. In order to do that, next we present a lemma that will be useful to state and prove Proposition \ref{acum-max}, which assures that every weak$^\star$ accumulation point at $\infty$ of the family $\left(\mu_{t}\right)_{t\geq1}$ is a maximizing measure for the potential $\phi$. The following lemma is already a well known result in the matter of countable Markov shifts satisfying the BIP condition, for details see \cite{BM}, and its validity basically depends on the existence of equilibrium states $\mu_t$, for each $t\geq 1$. Since, the existence of each one of the $\mu_t$'s is guaranteed in the topologically transitive context (see \cite{FV}), it is expected to obtain something similar to the result in \cite{BM} here. The statement of the lemma is the following:

\begin{lema}\label{prop-Esga-0}
	Given $t \geq 1$, assume that $\mu_t\in\mathcal{M}_{\sigma}(\Sigma)$ is an equilibrium state associated to the potential $t\phi$. Then,
	\begin{itemize}
		\item[i)] The family $\{h(\mu_t)\}_{t\geq 1}$ is decreasing,
		\item[ii)] $\lim\limits_{t\to\infty}\frac{P_G(t\phi)}{t}=\alpha(\phi)$.
	\end{itemize}
\end{lema}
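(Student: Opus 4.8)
The plan is to prove the two items in sequence, using throughout the defining identity $P_G(t\phi) = h(\mu_t) + t\mu_t(\phi)$ that the preceding remark supplies for every $t \geq 1$.

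For item (i), I would fix $1 \leq s < t$ and play the two equilibrium inequalities against each other. Since $\mu_s$ maximizes the metric pressure for $s\phi$ and $\mu_t$ maximizes it for $t\phi$, one has $h(\mu_s) + s\mu_s(\phi) \geq h(\mu_t) + s\mu_t(\phi)$ and $h(\mu_t) + t\mu_t(\phi) \geq h(\mu_s) + t\mu_s(\phi)$. Adding these and cancelling the entropies yields $(t - s)\big(\mu_t(\phi) - \mu_s(\phi)\big) \geq 0$, so that $t \mapsto \mu_t(\phi)$ is non-decreasing. Feeding $\mu_s(\phi) \leq \mu_t(\phi)$ back into the first inequality gives $h(\mu_s) - h(\mu_t) \geq s\big(\mu_t(\phi) - \mu_s(\phi)\big) \geq 0$, which is exactly the claimed monotonicity of $\{h(\mu_t)\}_{t\geq1}$.

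For item (ii), I would establish the two inequalities separately after dividing the pressure identity by $t$, writing $\tfrac{P_G(t\phi)}{t} = \tfrac{h(\mu_t)}{t} + \mu_t(\phi)$. For the lower bound I would exploit the description $\alpha(\phi) = \sup\{\alpha(\phi,x) : x \in \mathrm{Per}(\Sigma)\}$ recalled in the preliminaries: given $\varepsilon > 0$, choose a periodic measure $\nu$ with $\nu(\phi) > \alpha(\phi) - \varepsilon$; since $\nu$ has zero entropy and $-\nu(\phi)$ is finite, the variational principle gives $\tfrac{P_G(t\phi)}{t} \geq \nu(\phi) > \alpha(\phi) - \varepsilon$, whence $\liminf_{t\to\infty}\tfrac{P_G(t\phi)}{t} \geq \alpha(\phi)$. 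For the upper bound I would use $\mu_t(\phi) \leq \alpha(\phi)$ together with the monotonicity from (i) to bound $h(\mu_t) \leq h(\mu_1)$, obtaining $\tfrac{P_G(t\phi)}{t} \leq \tfrac{h(\mu_1)}{t} + \alpha(\phi)$ and hence $\limsup_{t\to\infty}\tfrac{P_G(t\phi)}{t} \leq \alpha(\phi)$.

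The step requiring the most care is the finiteness of $h(\mu_1)$, on which the upper bound hinges. This follows because $\mu_1$ attains the supremum defining $P_{top}(\phi)$, so that $-\mu_1(\phi) < \infty$; since $P_G(\phi) = P_{top}(\phi) < \infty$ by hypothesis, the identity $h(\mu_1) = P_G(\phi) - \mu_1(\phi)$ forces $h(\mu_1) < \infty$. Once this is in hand, $\tfrac{h(\mu_t)}{t} \to 0$ and the two bounds pinch the limit to $\alpha(\phi)$, completing the proof.
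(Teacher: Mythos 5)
Your proof is correct. Item (i) follows the same route as the paper: play the two variational-principle inequalities for $s\phi$ and $t\phi$ against each other to get that $t\mapsto\mu_t(\phi)$ is non-decreasing, then feed this back into one of them to get monotonicity of the entropies; this is exactly the paper's argument. Item (ii) reaches the same squeeze $t\alpha(\phi)\leq P_G(t\phi)\leq t\alpha(\phi)+O(1)$ by different means at both ends. For the lower bound the paper invokes the existence of a maximizing measure $\nu\in\mathcal{M}_{max}(\phi)$ (Theorem~1 of \cite{BF}) and plugs it into the variational principle, whereas you approximate $\alpha(\phi)$ by zero-entropy periodic measures; your version only needs the identity $\alpha(\phi)=\sup\{\alpha(\phi,x)\colon x\in \mathrm{Per}(\Sigma)\}$ recalled in the preliminaries and avoids relying on $\mathcal{M}_{max}(\phi)\neq\emptyset$. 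For the upper bound the paper cites the monotonicity of $t\mapsto P_G(t\phi)-t\alpha(\phi)$ from \cite{BM} to get the uniform bound $P_G(t\phi)-t\alpha(\phi)\leq P_G(\phi)-\alpha(\phi)$, whereas you use $\mu_t(\phi)\leq\alpha(\phi)$ together with $0\leq h(\mu_t)\leq h(\mu_1)<\infty$ from item (i), and you correctly justify the finiteness of $h(\mu_1)$ from $P_G(\phi)<\infty$ and $-\mu_1(\phi)<\infty$. Your route is more self-contained (it removes the external appeal to \cite{BM}), at the cost of a slightly weaker but equally sufficient uniform bound $h(\mu_1)$ in place of $P_G(\phi)-\alpha(\phi)$; both arguments are sound.
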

\begin{proof} A similar procedure to the one in Lemma $9$ from \cite{BM} shows that the family $\{\mu_{t}(\phi)\}_{t\geq1}$ is increasing. 
	Fix $1\leq t_1<t_2$. Since $\mu_1\in\mathcal{M}_{\sigma}(\Sigma)$ is an equilibrium state for the potential $t_1\phi$, we have
	\begin{eqnarray}\label{entrop-1}
		h(\mu_{t_1})=P_G(t_1\phi)-t_1\mu_{t_1}(\phi).
	\end{eqnarray}
	Note that by the variational principle in (\ref{principle-var}), we have $P_G(t_1\phi)> h(\mu_{t_2})+t_1\mu_{t_2}(\phi)$. Hence, replacing in (\ref{entrop-1}), we obtain
	\begin{equation}\label{entrop-2}
		h(\mu_{t_1})> h(\mu_{t_2})+t_1\left(\mu_{t_2}(\phi)-\mu_{t_1}(\phi)\right).
	\end{equation}
	As $\mu_{t_2}(\phi)-\mu_{t_1}(\phi)> 0$, so $h(\mu_{t_1})> h(\mu_{t_2})$ and thus the family $\{h(\mu_t)\}_{t\geq 1}$ is decreasing. 
	
	Now we will prove item $ii)$. Note that $|\mathcal{M}_{max}(\phi)|\neq\emptyset$, see Theorem $1$ from \cite{BF}. Let $\nu\in\mathcal{M}_{max}(\phi)$ and $t\geq 1$, then by the variational principle (\ref{principle-var}), we have the following
	$$h(\nu)+t\nu(\phi)\leq \sup\left\{h(\mu)+t \mu(\phi)\colon\mu\in\mathcal{M}_{\sigma}(\Sigma)\;\mbox{ and }- \mu(\phi)<\infty\right\}=P_{G}(t\phi).$$
	Later
	\begin{equation}\label{max-pres}
		t\alpha(\phi)\leq P_G(t\phi).
	\end{equation}
	On the other hand, the map $t\mapsto P_G(t{\phi})-t\alpha(\phi)$ is decreasing in $[1,\infty)$, see \cite{BM}. Therefore $0\leq P_G(t{\phi})-t\alpha(\phi)\leq P_G({\phi})-\alpha(\phi)<\infty$ for every $t \geq 1$, so 
	$$\lim\limits_{t\to\infty}\frac{P_G(t\phi)}{t}=\lim\limits_{t\to\infty}\frac{1}{t}\left(P_G(t\phi)-t\alpha(\phi)\right)+\alpha(\phi)=\alpha(\phi).$$
\end{proof} 

\begin{proposition}\label{acum-max}
	Every weak$^\star$ accumulation point at $\infty$ of the family of equilibrium states $\left(\mu_{t}\right)_{t\geq1}$ belongs to the set $\mathcal{M}_{max}(\phi)$.
\end{proposition}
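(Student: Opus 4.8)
The plan is to prove the slightly stronger intermediate fact that the \emph{whole} family satisfies $\lim_{t\to\infty}\mu_t(\phi)=\alpha(\phi)$, and then to transfer this limit to an arbitrary accumulation point by an upper semicontinuity argument. First I would start from the equilibrium identity recalled in the Remark, namely $P_G(t\phi)=h(\mu_t)+t\mu_t(\phi)$, and divide it by $t$ to obtain $\tfrac{1}{t}P_G(t\phi)=\tfrac{1}{t}h(\mu_t)+\mu_t(\phi)$. By Lemma \ref{prop-Esga-0} part i) the family $\{h(\mu_t)\}_{t\geq1}$ is decreasing, hence bounded by $h(\mu_1)<\infty$; therefore $\tfrac{1}{t}h(\mu_t)\to0$ as $t\to\infty$. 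Combining this with Lemma \ref{prop-Esga-0} part ii), which gives $\tfrac{1}{t}P_G(t\phi)\to\alpha(\phi)$, I would conclude that $\lim_{t\to\infty}\mu_t(\phi)=\alpha(\phi)$.

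Next, let $\mu_\infty$ be any weak$^\star$ accumulation point, so that $\mu_{t_n}\to\mu_\infty$ for some sequence $t_n\to\infty$; by \cite{FV} this $\mu_\infty$ is a genuine element of $\mathcal{M}_{\sigma}(\Sigma)$. The inequality $\mu_\infty(\phi)\leq\alpha(\phi)$ is then immediate from the definition \eqref{maximizing-value} of $\alpha(\phi)$. For the reverse inequality I would use that $\phi$, being summable, is bounded above, and being continuous is in particular upper semicontinuous; hence the functional $\mu\mapsto\mu(\phi)$ is upper semicontinuous for the weak$^\star$ topology, which yields $\limsup_{n}\mu_{t_n}(\phi)\leq\mu_\infty(\phi)$. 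Since $\mu_{t_n}(\phi)\to\alpha(\phi)$ by the first step, this gives $\alpha(\phi)\leq\mu_\infty(\phi)$. Combining the two inequalities produces $\mu_\infty(\phi)=\alpha(\phi)$, i.e. $\mu_\infty\in\mathcal{M}_{max}(\phi)$.

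The step I expect to be the main obstacle is the upper semicontinuity of $\mu\mapsto\mu(\phi)$ in the non-compact setting, since $\phi$ may be unbounded below and mass could a priori escape to infinity. I would make this precise by the standard truncation $\phi_M:=\max(\phi,-M)$, which is bounded and continuous, so that $\mu_{t_n}(\phi_M)\to\mu_\infty(\phi_M)$ by weak$^\star$ convergence; from $\mu_{t_n}(\phi)\leq\mu_{t_n}(\phi_M)$ one gets $\limsup_n\mu_{t_n}(\phi)\leq\mu_\infty(\phi_M)$, and letting $M\to\infty$ together with monotone convergence (valid because $\phi_M\downarrow\phi$ and $\phi$ is bounded above) gives $\mu_\infty(\phi_M)\downarrow\mu_\infty(\phi)$. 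The fact that $\mu_\infty$ is an honest probability measure and is $\sigma$-invariant, which underlies this argument, is exactly what the summability hypothesis guarantees through the uniform control on the tails of $(\mu_t)_{t\geq1}$, and I would invoke it from \cite{FV} rather than reprove it here.
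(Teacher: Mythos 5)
Your proposal is correct and follows essentially the same route as the paper: both reduce the claim to $\alpha(\phi)\leq\mu_\infty(\phi)$, obtain it from the identity $P_G(t\phi)/t=h(\mu_t)/t+\mu_t(\phi)$ together with the boundedness of $\{h(\mu_t)\}_{t\geq1}$ from Lemma \ref{prop-Esga-0}, and then pass to the accumulation point via upper semicontinuity of $\mu\mapsto\mu(\phi)$. The only difference is cosmetic: the paper cites Lemma 1 of \cite{JMU1} for the upper semicontinuity step, whereas you spell out the standard truncation argument behind it.
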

\begin{proof} Consider some arbitrary accumulation point $\mu_{\infty}\in\mathcal{M}_{\sigma}(\Sigma)$ of the family $(\mu_{t})_{t\geq1}$. It is enough to verify that
	\begin{equation*}
		\al(\phi)\leq\mu_{\infty}\left(\phi\right).
	\end{equation*}
	Indeed, since that map $\mu\mapsto\mu(\phi)$, from $\mathcal{M}_{\sigma}(\Sigma)$ into $[0, \infty)$, is upper semi-continuous in the weak$^\star$ topology, see Lemma $1$ \cite{JMU1}, we have 
	\begin{equation}\label{equ-max1}
		\limsup_{t\to\infty}\mu_t(\phi)\leq \mu_{\infty}(\phi).
	\end{equation}
	
	\noindent
	On the other hand, by variational principle, for each $t\geq1$ we have
	$$\frac{P_G(t\phi)}{t}=\frac{h(\mu_t)}{t}+\mu_{t}(\phi).$$
	Now, taking the $\limsup$ in the last equality, since $\left(h(\mu_t)\right)_{t\geq 1}$ it is bounded by above (see Lemma \ref{prop-Esga-0}), we obtain that
	$$\alpha(\phi)=\limsup\limits_{t\to\infty}\frac{P_G(t\phi)}{t}\leq\limsup_{t\to\infty}\mu_{t}(\phi)\leq \mu_{\infty}(\phi).$$
\end{proof}

An important condition necessary to prove the convergence in the weak$^\star$ topology of the family of equilibrium states $(\mu_t)_{t\geq1}$ at $\infty$, is to show that the potential $t\phi$ is positively recurrent for each $t\geq1$. That statement is verified in the following proposition. 

\begin{proposition}\label{lema-posrecu}
	For every $t\geq 1$ the potential $t\phi$ is positive recurrent.
\end{proposition}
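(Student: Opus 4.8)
The plan is to place $t\phi$ within the scope of Sarig's recurrence classification and of the Buzzi--Sarig identification of equilibrium states, and then to read off positive recurrence directly from the existence of $\mu_t$. First I would check that $t\phi$ inherits all the regularity assumed on $\phi$. Since $\phi$ is a Markov potential, $\Var_n(\phi)=0$ for every $n\geq 2$, hence $\Var_n(t\phi)=0$ for $n\geq 2$ and $\Var_1(t\phi)=t\,\Var_1(\phi)<\infty$; in particular $t\phi$ has summable (indeed bounded) variations. Summability of $\phi$ forces $\sup(\phi|_{[i]})\to-\infty$, so $\sup(t\phi)<\infty$ and $t\phi$ is bounded from above; moreover, for all but finitely many $i$ one has $\sup(\phi|_{[i]})<0$, so the tail of $\sum_i\exp\!\big(t\sup(\phi|_{[i]})\big)$ is dominated by that of $\sum_i\exp\!\big(\sup(\phi|_{[i]})\big)$, whence $t\phi$ is again summable. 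Finally $P_G(t\phi)<\infty$: the monotonicity already used in Lemma \ref{prop-Esga-0} gives $0\leq P_G(t\phi)-t\alpha(\phi)\leq P_G(\phi)-\alpha(\phi)$, and since $\alpha(\phi)\leq\sup\phi<\infty$ we conclude $P_G(t\phi)<\infty$. Thus $t\phi$ satisfies the hypotheses of Theorems $1.1$ and $1.2$ of \cite{BS} and of Sarig's generalized Ruelle--Perron--Frobenius theory.

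Next I would invoke the existence of the equilibrium state. By the Remark (that is, by \cite{FV}) the unique equilibrium state $\mu_t$ of $t\phi$ exists for every $t\geq 1$, and by Theorems $1.1$--$1.2$ of \cite{BS} it takes the form $\d\mu_t=h_t\,\d\nu_t$, where $L_{t\phi}h_t=\exp(P_G(t\phi))h_t$ and $L_{t\phi}^{*}\nu_t=\exp(P_G(t\phi))\nu_t$, with $\nu_t$ the conformal measure of (\ref{dual}). Because $\mu_t$ is a probability measure and $\d\mu_t=h_t\,\d\nu_t$, the eigenfunction is automatically $\nu_t$-integrable, i.e. $\int h_t\,\d\nu_t<\infty$. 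Moreover, since $h_t>0$ the measures $\mu_t$ and $\nu_t$ are equivalent, so $\nu_t$ carries a finite equivalent invariant measure; by Poincar\'e recurrence this makes $\nu_t$ a conservative conformal measure, which is precisely recurrence of $t\phi$ (equivalently, $\sum_{n}\exp(-nP_G(t\phi))Z_n(t\phi,a)=\infty$).

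Having recurrence, the conformal measure $\nu_t$ and the eigenfunction $h_t$ are unique up to scaling, and Sarig's discriminant dichotomy (see \cite{Sa3}) states that within the recurrent class the potential is positive recurrent exactly when $\int h_t\,\d\nu_t<\infty$, the complementary case $\int h_t\,\d\nu_t=\infty$ being null recurrence. The finiteness established in the previous step therefore forces $t\phi$ into the positive recurrent class for every $t\geq 1$, which is the assertion of the proposition; equivalently, $\sum_{n\geq 1}n\exp(-nP_G(t\phi))Z_n^{*}(t\phi,a)<\infty$.

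The main obstacle is the middle step, namely the careful identification of the equilibrium state supplied by \cite{FV} with the Ruelle--Perron--Frobenius measure $h_t\nu_t$ of \cite{BS}, so that transience is excluded and the finite-mass condition $\int h_t\,\d\nu_t<\infty$ becomes available; once this identification and the conservativity of $\nu_t$ are in place, positive recurrence is immediate from Sarig's classification. All the analytic content is concentrated there, since the remaining verifications (regularity of $t\phi$, finiteness of $P_G(t\phi)$, and integrability of $h_t$) are routine consequences of the Markov property of $\phi$, of summability, and of the bound inherited from Lemma \ref{prop-Esga-0}.
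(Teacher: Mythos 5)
Your proposal is correct and follows essentially the same route as the paper: existence of the unique equilibrium state $\mu_t$ from \cite{FV} combined with Theorem $1.2$ of \cite{BS}, which states that a potential (bounded above, with summable variations and finite Gurevich pressure) admitting an equilibrium state is positive recurrent. The additional unpacking via conservativity of $\nu_t$ and the finiteness of $\int h_t\,\d\nu_t$ is already subsumed in the statement of that theorem, so the paper simply cites it and stops there.
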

\begin{proof} Note that the potential $\phi$ is positive recurrent. The Theorem 1  in \cite{FV} guarantees that for every $t>1$ there is a unique equilibrium state $\mu_{t}$ associated to potential $t\phi$, so by Theorem $1.2$ in \cite{BS} the potential $t\phi$ is positive recurrent, for every $t>1$.
\end{proof}

\begin{remark}
	Under the same hypotheses of the previous lemma, but assuming that $\Sigma$ is a finitely primitive countable Markov shift, I. Morris showed in \cite{Mo}  that
	\begin{equation}\label{entrop-inf}
		h(\mu_{\infty})=\lim_{t\to\infty}h(\mu_t)=\sup_{\nu\in\mathcal{M}_{max}(\phi)}h(\nu),
	\end{equation}
	where $\mu_{\infty}\in\mathcal{M}_{\sigma}(\Sigma)$ is some accumulation point of the family of equilibrium states $(\mu_t)_{t\geq1}$. Actually, R. Freire and V. Vargas in \cite{FV} obtained an extension of \eqref{entrop-inf} for the setting of topologically transitive countable Markov shifts.
\end{remark}

For every potential $\phi$, the potential $\widetilde{\phi}=\phi-\alpha(\phi)$ is called {\it normalized potential}. Notice that $\widetilde{\phi}\leq 0$ and $\alpha(\widetilde{\phi})=0$. Furthermore, those potentials have the same equilibrium state, that is, $\mu_{\phi}=\mu_{\widetilde{\phi}}$. So, for ease of computation, from now on, we consider the normalization $\widetilde{\phi}$ of $\phi$ and, in order to not overload the notation, we will denote simply by $\phi$ the normalized one.

\begin{proposition}\label{pres-decre} The following properties are satisfied
	\begin{itemize}
		\item[i)] $P_G(t\phi)\geq0$, for every $t\geq1$;
		\item[ii)] the function $t\mapsto P_G(t\phi)$ is decreasing;
		\item[iii)] $\lim\limits_{t\to\infty}P_{G}(t\phi)=h(\mu_{\infty})$, where $\mu_{\infty}\in\mathcal{M}_{max}(\Sigma)$ is a weak$^\star$ accumulation point at $\infty$ for the family of equilibrium states $\left(\mu_{t}\right)_{t\geq1}$.
	\end{itemize}
\end{proposition}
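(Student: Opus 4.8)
The plan is to exploit the normalization adopted just before the statement, namely $\phi\le 0$ and $\alpha(\phi)=0$, and to feed it into the variational principle together with the monotonicity facts already recorded. For item i), I would simply specialize inequality (\ref{max-pres}) from Lemma \ref{prop-Esga-0}: after normalization it reads $0=t\alpha(\phi)\le P_G(t\phi)$, which is exactly i). (Equivalently, choosing a maximizing measure $\nu\in\mathcal{M}_{max}(\phi)$, whose existence is guaranteed by Theorem~$1$ of \cite{BF}, one has $\nu(\phi)=0$ and the variational principle (\ref{principle-var}) gives $P_G(t\phi)\ge h(\nu)+t\nu(\phi)=h(\nu)\ge 0$.) For item ii), I would invoke the cited fact from \cite{BM} that $t\mapsto P_G(t\phi)-t\alpha(\phi)$ is decreasing on $[1,\infty)$; since $\alpha(\phi)=0$ after normalization, this is precisely the assertion that $t\mapsto P_G(t\phi)$ is decreasing. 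An alternative, self-contained route is to observe that $\phi\le 0$ forces $t_2\phi\le t_1\phi$ pointwise whenever $1\le t_1<t_2$, and then use monotonicity of the Gurevich pressure in the potential (immediate from the variational principle, taking care that any $\nu$ admissible for $t_2\phi$ is also admissible for $t_1\phi$).

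For item iii), items i) and ii) already give that $t\mapsto P_G(t\phi)$ is nonincreasing and bounded below by $0$, so the limit $L:=\lim_{t\to\infty}P_G(t\phi)$ exists; it then remains to identify $L$ with $h(\mu_\infty)$, which I would do by squeezing. For the lower bound I would test the variational principle against the accumulation point itself: by Proposition \ref{acum-max} one has $\mu_\infty\in\mathcal{M}_{max}(\phi)$, hence $\mu_\infty(\phi)=\alpha(\phi)=0$, so $P_G(t\phi)\ge h(\mu_\infty)+t\mu_\infty(\phi)=h(\mu_\infty)$ for every $t\ge 1$, giving $L\ge h(\mu_\infty)$. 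For the upper bound I would use that $\mu_t$ is an equilibrium state, so $P_G(t\phi)=h(\mu_t)+t\mu_t(\phi)$, and that $\phi\le 0$ forces $t\mu_t(\phi)\le 0$; therefore $P_G(t\phi)\le h(\mu_t)$. Passing to the limit and invoking the entropy convergence $\lim_{t\to\infty}h(\mu_t)=h(\mu_\infty)$ recorded in (\ref{entrop-inf}) yields $L\le h(\mu_\infty)$. Combining the two bounds gives $L=h(\mu_\infty)$, as claimed.

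The step I expect to be the main obstacle is the upper bound in iii), because it hinges on the entropy-convergence identity $\lim_{t\to\infty}h(\mu_t)=h(\mu_\infty)$, which in the non-compact countable Markov shift setting does not follow from a naive upper semicontinuity of $\nu\mapsto h(\nu)$ (such semicontinuity can genuinely fail). Rather than attempting to prove upper semicontinuity of the entropy directly, I would lean on the cited extension of I.~Morris's theorem \cite{Mo} to topologically transitive shifts due to \cite{FV}, recorded here as (\ref{entrop-inf}); it is ultimately the summability of $\phi$ that controls the tails of the measures $(\mu_t)_{t\ge 1}$ and legitimizes this passage to the limit. Everything else, namely items i) and ii) and the lower bound in iii), is routine once the normalization and the variational principle are in place.
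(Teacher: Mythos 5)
Your proposal is correct and follows essentially the same route as the paper: items i) and ii) are read off from Lemma \ref{prop-Esga-0} after normalization, and item iii) is proved by squeezing, with the lower bound coming from testing the variational principle against $\mu_\infty$ and the upper bound resting on the entropy convergence $\lim_{t\to\infty}h(\mu_t)=h(\mu_\infty)$ from \cite{FV}, exactly the external input the paper invokes (Theorem 2 of \cite{FV}). Your version of the upper bound ($P_G(t\phi)\le h(\mu_t)$ since $t\mu_t(\phi)\le 0$) is in fact a slightly more direct form of the paper's chain of inequalities, and your identification of the entropy-convergence step as the genuine crux is accurate.
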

\begin{proof} The proofs of items $i)$ and $ii)$ are obtained directly from Lemma \ref{prop-Esga-0}. Now we proceed to check item $iii)$.  Let $\{t_k\}_{k\in\mathbb{N}}$ be an increasing sequence of numbers greater than one converging to infinity such that $\mu_{t_k}\to\mu_{\infty}$ as $k\to\infty$ in the weak$^\star$ topology. Note that $\lim_{k\to\infty}\mu_{t_k}(\phi)=0$, see Proposition \ref{acum-max}. So, by Theorem 2 \cite{FV}, we obtain
	\begin{eqnarray*}\label{pres-to-h}
		h(\mu_{\infty})&=&\limsup_{k\to\infty}h(\mu_{t_k})\\
		&=&\limsup_{k\to\infty}\left(P_G(t_{k}\phi)-t_{k}\cdot\mu_{t_k}(\phi)\right)\\
		&\geq&\limsup_{k\to\infty}\left(P_G(t_k\phi)-\mu_{t_k}(\phi)\right)\\
		&=&\lim_{k\to\infty}P_G(t_k\phi)-\liminf_{k\to\infty}\mu_{t_k}(\phi)\\
		&=&\lim_{t\to\infty}P_G(t\phi).
	\end{eqnarray*}
	On the other hand $P_G(t\phi)\geq h(\mu_{\infty})+t\mu_{\infty}(\phi)=h(\mu_{\infty})$. Therefore $\lim\limits_{t\to\infty}P_{G}(t\phi)=h(\mu_{\infty})$.
\end{proof}

\subsection{Existence of the zero temperature limit for Markov potentials}

From now on, we consider locally constant potentials $\phi: X \to \mathbb{R}$, so that without loss of generality we can assume $\phi$ as a Markov potential, i.e., $\phi(x)=\phi(x_0 x_1)$. This is true because any locally constant potential is cohomologous to a Markov potential (we send the reader to \cite{JMU2} for details).

By \cite{Sa1}, we have that the potential $\phi(x) = \phi(x_0 x_1)$ has an associated equilibrium state $\mu\in\mathcal{M}_{\sigma}(\Sigma)$ because it is positive recurrent, see Lemma \ref{lemma1-Kem}. So the Ruelle operator  $L_\phi$ is well defined on the space of bounded continuous functions. In particular, for any function of the form $\psi(x)= \psi(x_0)$ we have
\begin{equation}\label{op-Ruel-h}
	L_\phi (\psi)(x) = L_\phi (\psi)(x_0) = \sum_{\substack{a\in S\\ A(a, x_0)=1}}\exp({\phi(ax_0)})\psi(a). 
\end{equation}
From Theorem $1.2$ in \cite{BS}, it follows that the operator $L_{\phi}$ has a strictly positive eigenfunction $h$, $L_{\phi}h=\lambda h$, where $\lambda=\exp(P_{top}(\phi))$ and by (\ref{op-Ruel-h}) we have $h(x)=h(x_0)$. In this case, we also can define the transpose of the Ruelle operator, $L^\intercal_{\phi},$ calculated in a function $\psi(x)= \psi(x_0)$ as 
$$
L^\intercal_\phi (\psi)(x) = L^\intercal_\phi (\psi)(x_0) = \sum_{\substack{a\in S \\ A(x_0 a)=1}}\exp({\phi(x_0a)})\psi(a).
$$
It is not difficult to check that the operator $L^\intercal_\phi$ has a strictly positive eigenfunction $h^\intercal$, satisfying $L^\intercal_\phi(h^\intercal) = \lambda h^\intercal$ and $h^\intercal(x) = h^\intercal(x_0)$, where $\lambda$ is the main eigenvalue of the operator $L_\phi$, and $\sum_{a \in S}h(a)h^\intercal(a) = 1$. A detailed proof about this claim can be found in \cite{Gu}. 

\begin{remark}\label{remark-G}
	Theorem C in \cite{Gu} states that the equilibrium state $\mu$ is unique and it is an stationary Markov measure given by the formula
	\begin{equation}\label{Markov1}
		\mu([x_0x_1\ldots x_n])=\pi(x_0)p(x_0 x_1)p(x_1 x_2)\ldots p(x_{n-1} x_n),
	\end{equation}
	where $\pi(a)=h(a)h^\intercal(a)>0$ for all $a \in S$, is the  stationary probability measure. Moreover, here $h$, $h^\intercal$ are the main eigenfunctions of the previously indicated operators $L_\phi$ and $L^\intercal_\phi$, respectively. The explicit form for the transition probabilities is given by
	\begin{equation}\label{Markov2}
		p(a,b)= \frac{h\left(b\right)}{ h(a)}\exp\left(\phi(ab)-P_{top}(\phi)\right) .  
	\end{equation}
\end{remark}

Here, it is convenient to define a measure $\widehat{\mu}_t\in\mathcal{M}_{\sigma}(\widehat{\Sigma})$ on the bilateral countable Markov shift $\widehat{\Sigma} := \{x \in S^{\mathbb{Z}}\colon A(x_i, x_{i+1})=1,\; \forall i \in \mathbb{Z}\}$, associated to the potential $t\widehat{\phi}((x_i)_{i \in \mathbb{Z}}) = t\phi(x_0x_1)$, given by $\widehat{\mu}_t([x_mx_{m-1}\ldots x_n]) = \mu_t([x^{\prime}_0x^{\prime}_1\ldots x^{\prime}_{n-m}])$, $m, n \in \mathbb{Z}$ and $m \leq n$, where $x^{\prime}_0 =x_m,\ldots, x^{\prime}_{n-m}=x_n$. The measure $\widehat{\mu}_t$ is invariant under the bilateral shift map and convergence of the family $(\widehat{\mu}_t)_{t\geq1}$ implies the convergence of the family $(\mu_t)_{t\geq1}$. 

To facilitate the computations that appear below, we will use the measures $\widehat{\mu}_t$ defined on the bilateral countable Markov shift $\widehat{\Sigma}$ instead of the measures $\mu_t$ defined on the unilateral one $\Sigma$. Trying to not overload the notation, hereafter we will denote the bilateral countable Markov shift by $\Sigma$, we will use the notation $\mu_t$ for its corresponding equilibrium states and we will denote by $\sigma$ the map given by $(\sigma x)_i = x_{i+1}$ for any $i \in \mathbb{Z}$.

Also, in order to simplify our notation, for each path $\underline{\g}=x_0x_1\ldots x_n$ we use $l(\underline{\g})$ to denote its length and  $\phi(\underline{\g}):=S_n\phi(x)$, $x\in[\underline{\gamma}]$. Since $\phi$ is a Markov potential, it follows that $\phi(\underline{\g})$ is constant on each  $x\in[\underline{\g}]$, so this notation is not ambiguous. Similarly, for any probability measure $\mu\in\mathcal{M}_{\sigma}(\Sigma)$, we write $\mu(\underline{\g}):=\mu\left([\underline{\g}]\right)$.  
Now, for a typical path $\underline{\gamma}=x_0x_1\ldots x_n \in \mathcal{P}(\Sigma)$, from equations \eqref{Markov1} and \eqref{Markov2} of Remark \ref{remark-G} we have
\begin{eqnarray}
	\mu_t(\underline{\gamma})&=&\pi(x_0)\prod_{k=0}^{n-1}\frac{h\left(x_{k+1}\right)}{ h(x_k)}\exp\Big(t\phi(x_kx_{k+1})-P_G(t \phi)\Big)\nonumber\\
	&=&\pi(x_0) \frac{h\left(x_{n}\right)}{ h(x_0)}\exp\left(t\phi(\underline{\gamma})-nP_G(t \phi)\right).\label{med-mark}
\end{eqnarray}
Obviously $\pi(x_0)=\mu_t(x_0)>0$, when  $\underline{\gamma}$ is a loop, that is $x_0=x_n$ we have $h(x_0)=h(x_n)$ and consequently
\begin{equation}\label{Kemptoneq}
	\mu_t(\underline{\gamma}) = \mu_t(x_0) \exp\left(t\phi(\underline{\gamma})-nP_G(t \phi)\right),
\end{equation}
this identity was deduced by Kempton in a more restrictive case (see  \cite{Ke}). In fact, the positive recurrence of the potential $t \phi$ and the topologically transitive condition of the Markov shift $\Sigma$ are necessary and sufficient conditions to get \eqref{Kemptoneq} (see Theorems C and D from \cite{Gu} for more details). 

Notice that by the notation introduced earlier, we have that
$$
\left(t\phi-P_{G}(t\phi)\right)(\underline{\g})=t\phi(\underline{\g})-nP_{G}(t\phi),$$
for every $t\geq 1$. Therefore, for any loop $\underline{\g}=x_0x_1\ldots x_n$ satisfying $x_0=x_n$, the equation \eqref{Kemptoneq} can be re-written into the form 
\begin{equation}\label{coc-loop}
	\frac{\mu_{t}(\underline{\gamma})}{\mu_{t}([x_0])}= \exp\left((t\phi-P_G(t \phi))(\underline{\gamma})\right).
\end{equation}
\noindent
On the other hand, by Theorem $1$ from \cite{BF} there exists a finite set $I\subset\mathbb{N}$, such that any $\phi$-maximizing measure $\mu\in\mathcal{M}_{\sigma}(\Sigma)$ satisfies $supp(\mu) \subset \Sigma_I$. From now on, the finite set $I$ will denote the set given for this theorem. 

\begin{remark}\label{d}
	Since $\phi$ is a coercive potential, by Lemma $2$ in \cite{BF} there exists $d>0$ such that $\sup \phi|_{[i]}<-d$, for every $i\notin I$, where $\sup\phi|_{[i]}:=\sup\{\phi(x)\colon x\in[i]\}$. 
\end{remark}

By Proposition \ref{acum-max}, we have that any weak$^\star$ accumulation point $\mu_\infty$ of the family of equilibrium states $(\mu_t)_{t\geq1}$ is a maximizing measure supported on $\cup_{i \in I}[i]$. As a consequence, the existence of the limit $\lim_{t \to \infty}\mu_t$ in the weak$^\star$ topology is equivalent to showing existence of the limit $\lim_{t\to\infty}\mu_t[a]$ for all $a\in I$ (see for instance \cite{Ke}). Because of that, it is enough to check the convergence of the ratios $\lim_{t\to\infty}\frac{\mu_{t}([b])}{\mu_{t}([a])}$, for all $a,b\in I$, to show the convergence of $(\mu_t)_{t\geq1}$ in the weak$^\star$ topology. In fact, the limit of the ratios can even be infinite.

From now on let us fix $a,b \in  I$, we define 
$$\Sigma(a) := \{x\in\Sigma\colon x_i =a \mbox{ for infinitely many } i \in \mathbb{N}_0\} \;.$$
Clearly $\Sigma(a)$ is a closed $\sigma$-invariant countable Markov subshift. For every $t \geq 1$, the potential $t \phi$ is positive recurrent, see Lemma \ref{lema-posrecu}, so that $\mu_{t}\in\mathcal{M}_{\sigma}(\Sigma)$ is an ergodic measure and moreover $\mu_{t}(\Sigma(a))=1$, for all $t\geq1$. Then we have that $\mu_{t}([b])=\mu_{t}(\Sigma(a)\cap[b])$, for every $b \in I$.

Let $\Gamma(a)$ denote the set of paths $\underline{\gamma}=x_0x_1\ldots x_n$, $n\geq 1$, such that $x_j=a$ iff $j\in\{0,n\}$. Since $\Gamma(a)$ is countable (because is countable union of countable sets), this allows write them as $\Gamma(a)=\{\underline{\gamma_i}\}_{i=1}^{\infty}$, where every $\underline{\gamma}_i\in \Gamma(a)$ for $i\in\mathbb{N}$. Notice that  $\Sigma(a)$ can be splitted as
$$\Sigma(a)=\bigcup_{i=1}^{\infty}\bigcup_{k=1}^{l(\underline{\gamma}_i)}\sigma^k[\underline{\gamma}_i],$$
and so,
\begin{equation}\label{a-inter-b}
	\Sigma(a)\cap[b]=\bigcup_{i=1}^{\infty}\bigcup_{k=1}^{l(\underline{\gamma}_i)}\sigma^k[\underline{\gamma}_i]\cap[b].  
\end{equation}
For any loop $\underline{\gamma}_i \in \Gamma(a)$, let $N(b,\underline{\gamma}_i)$  be the number of occurrences of the symbol $b$ within the loop $\underline{\gamma}_i$, note that  $N(b,\underline{\gamma}_i)=\sum_{k=1}^{l(\underline{\gamma}_i)}\mathbbm{1}_{[b]}\left(\sigma^k[\underline{\gamma}_i]\right)$.

Fix $t\geq 1$, recalling that $\mu_{t}([b])=\mu_{t}\left(\Sigma(a)\cap[b]\right)$ and $\mu_t$ is invariant by the action of the bilateral shift $\sigma$, from equation (\ref{a-inter-b}) we see that 
\begin{eqnarray}
	\mu_{t}([b])&=&\sum_{i=1}^{\infty}\sum_{k=1}^{l(\underline{\gamma}_i)}\mu_{t}\left(\sigma^k[\underline{\gamma}_i]\right)\mathbbm{1}_{[b]}\left(\sigma^k[\underline{\gamma}_i]\right)\\
	&=&\sum_{i=1}^{\infty}\mu_{t}(\underline{\gamma}_i)N(b,\underline{\gamma}_i).
\end{eqnarray}
From \eqref{coc-loop} we know that for any closed loop $\underline{\gamma}_i\in \Gamma(a)$ 
\begin{equation}\label{coc-loop2}
	{\mu_{t}(\underline{\gamma}_i)}={\mu_{t}([a])}\exp\left((t\phi-P_G(t \phi))(\underline{\gamma}_i)\right),
\end{equation}
so
\begin{eqnarray}
	\mu_{t}([b])&=&\sum_{i=1}^{\infty}\mu_{t}([a])\exp\left((t\phi-P_{G}(t\phi))(\underline{\gamma}_i)\right)N(b,\underline{\gamma}_i)
\end{eqnarray}
and hence
\begin{equation}\label{b-a}
	\frac{\mu_{t}([b])}{\mu_{t}([a])}=\sum_{i=1}^{\infty}\exp\left((t\phi-P_{G}(t\phi))(\underline{\gamma}_i)\right)N(b,\underline{\gamma}_i). 
\end{equation}
Actually, the finiteness of $\frac{\mu_{t}([b])}{\mu_{t}([a])}$, for all $t \geq 1$ is guaranteed by the positive recurrence of the potential $t \phi$ (see for instance \cite{Sa1}).

Those closed loops $\underline{\gamma}_i\in\Gamma(a)$ which do not pass through of the symbol $b$ have no relevance at the right-hand of the equation \eqref{b-a}, because $N(b,\underline{\gamma}_i)=0$. In this case, the equation \eqref{b-a} is equivalent to
\begin{equation}\label{b-a2}
	\frac{\mu_{t}([b])}{\mu_{t}([a])}=\sum_{m=1}^{\infty}m \sum_{\substack{\underline{\g}_i \in \Gamma(a)}}\exp\left((t\phi-P_{G}(t\phi))(\underline{\gamma}_i)\right)\mathbbm{1}_{[N(b,\underline {\g}_i)=m]},
\end{equation}
where $\mathbbm{1}_{[N(b,\underline {\g}_i)=m]}=1$ if $N(b,\underline {\g}_i)=m$ and otherwise $\mathbbm{1}_{[N(b,\underline {\g}_i)=m]}=0$.

\begin{defi}
	Let $a,b\in I$. We say that $\underline{\g}=x_0\ldots x_n$, $n \geq 1$, is a main path in $\Sigma$ that starts at $i$ and ends at $j$ where $i,j\in\{a,b\}$ iff $x_0=i$, $x_n=j$ and $x_m\notin\{a,b\}$ for $m\in \{1,2,\ldots,n-1\}$. This means that the symbols $a,b$ does not appear in the middle of the path $\underline{\g}$. We will denote by $\{\underline{\g}\colon i\to j\}$ the set of all the main paths that start at $i$ and end at $j$. 
\end{defi}

Note that $\{\underline{\g}\colon a\to a\}$ and $\Gamma(a)$  do not represent the same set. The foregoing is true because $\Gamma(a)$ contains paths with the symbol $b$ while $\{\underline{\g}\colon a\to a\}$ does not. For $\widetilde{\Sigma}$ a topologically transitive Markov subshift of $\Sigma$, i.e., $\widetilde{\Sigma}\subset \Sigma$ closed and invariant by $\sigma$, we will use the following notation $\{\underline{\g}\colon i\to j\in\mathcal{P}(\widetilde{\Sigma})\}$ to indicate that each main path that starts at $i$ and ends at $j$ is a path of $\widetilde{\Sigma}$ (remember that $\mathcal{P}(\widetilde{\Sigma})$ denotes the set of paths in $\widetilde{\Sigma}$).

For every $i,j \in \{a,b\}$ we define 
\begin{equation}\label{p_}
	p_{ij}^{t}:=\sum_{\underline{\gamma} \in\{\underline{\g}\colon i\to j\}}\exp\left((t\phi-P_{G}(t\phi))(\underline{\gamma})\right).
\end{equation}
Note from (\ref{coc-loop}) that $p_{ii}^{t}<1$, for every $i\in I$. Indeed, the probability, with respect to $\mu_{t}$, that a path from $i$ returns to $i$ eventually is one, so from (\ref{coc-loop}) and (\ref{p_}) we observe that it can be splitted into $p_{ii}^{t}$, the probability that a path from $i$ returns to $i$ without passing through $j$, where $j\neq i$, and $p_{ij}^{t}\left(\sum_{n\geq 1}(p_{jj}^{t})^n\right)p_{ji}^{t}$, the probability that a path from $i$ returns to $i$ passing through $j$ at least once. Therefore
\begin{equation}\label{A}
	p_{ii}^{t}+p_{ij}^{t}p_{ji}^{t}\sum_{n\geq 1}(p_{jj}^{t})^n=1.
\end{equation}
\noindent
Recalling that $t \phi$ is a Markov potential, for all $m\in\mathbb{N}$ we get from \eqref{p_}
\begin{eqnarray}
	\sum_{\substack{\underline{\g} \in\Gamma(a) }}\exp\left((t\phi-P_{G}(t\phi))(\underline{\gamma})\right)\mathbbm{1}_{[N(b,\underline {\g})=m]}=p_{ab}^t(p_{bb}^t)^{m-1}p_{ba}^t.
\end{eqnarray}
Therefore, from \eqref{b-a2} we obtain 
\begin{equation}\label{b-a3}
	\frac{\mu_{t}([b])}{\mu_{t}([a])}=\sum_{m=1}^{\infty}m p^t_{ab}(p_{bb}^t)^{m-1}p^t_{ba}=\frac{p^t_{ab}p^t_{ba}}{(1-p^t_{bb})^2}.
\end{equation}
\noindent
By inverting the roles of $a$ and $b$ we get $\frac{\mu_{t}([a])}{\mu_{t}([b])}=\frac{p^t_{ba}p^t_{ab}}{(1-p^t_{aa})^2}$ and combining both expressions we conclude
\begin{equation}\label{coc-mu}
	\frac{\mu_{t}([b])}{\mu_{t}([a])}=\frac{1-p_{aa}^t}{1-p_{bb}^t}.
\end{equation}
\noindent
Therefore, proving the convergence of the equilibrium states $(\mu_t)_{t\geq 1}$, $\lim_{t\to \infty}\mu_{t}$, reduces to showing the existence of $\displaystyle\lim_{t\to\infty}\frac{1-p_{aa}^t}{1-p_{bb}^t}$ for all $a,b\in I$.

It is well known that in the setting of finite Markov shifts is satisfied the existence of the limit at zero temperature for families of equilibrium states associated to Markov potentials. Below we present a definition of a suitable collection of finite Markov shifts contained into $\Sigma$ which are useful to approximate the unique accumulation point of the family $(\mu_t)_{t\geq1}$ in the weak$^\star$ topology.

\begin{defi}\label{Sigma-c}
	Let $\Sigma$ be a topologically transitive countable Markov shift, $\phi:\Sigma\to\mathbb{R}$ be a Markov potential $\phi(x)=\phi(x_0x_1)$ and let $c\in\mathbb{R}$. We denote by $\Sigma_c$ to the smallest topologically transitive finite Markov shift that contains the symbols $\{i\in\mathbb{N}\colon\sup\phi|_{[i]}\geq c\}$.
\end{defi}
Note that for every $c\in\mathbb{R}$, we have that $\Sigma_c$ is a finite Markov shift  and $\Sigma_c \subset \Sigma_{c\prime}$ when $c \geq c\prime$. In the following, the key argument to prove the existence of the zero temperature limit of equilibrium states on topologically transitive countable Markov shifts, is the construction of an appropriate finite Markov subshift (which remains fixed for all $t\geq1$), whose equilibrium states approximate the ones defined on the countable Markov shift $\Sigma$.

For every $a,b\in I$, we will denote by $\underline{\gamma_a^b}$   the shortest path connecting $a$ to $b$. From the construction, we immediately get that $\underline{\g_a^b}$ contains the symbols $a$ and $b$ only at the ends, so that $\underline{\gamma_a^b} \in \{ \underline{\gamma}: a \rightarrow b \}$. This path always exists because $\Sigma$ is topologically transitive which allows to guarantee that any pair of symbols can be linked by a finite path. We denote $\underline{\gamma_a^b}\underline{\gamma_b^a}$ the concatenation of the paths $\underline{\gamma_a^b}$ with $\underline{\gamma_b^a}$, with this we have that there is always a loop that passes through $a$ and $b$.   We now consider the set 
$$\mathcal{C}:=\bigcup_{a \in I, b \in I} \{\underline{\gamma_a^b}\underline{\gamma_b^a}\in \mathcal{P}(\Sigma)\},$$
which is a non-empty and finite set.
In addition, for each pair $a,b\in I$ and each finite set of symbols $J\subset I$, consider $\underline{\gamma_a^b}(J)$ as the shortest path  connecting $a$ to $b$ and avoiding in the middle any symbol belonging to $J$, the foregoing, in the case that there is at least a path satisfying those conditions. We define the set of all the paths between $a$ and $b$ avoiding some subset of $I$ as
$$ \widetilde{\mathcal{C}}:=\bigcup_{a,b\in I; J\subset I} \{\underline{\gamma_a^b}(J)\in \mathcal{P}(\Sigma)\}.$$
Note that $\widetilde{\mathcal{C}}$ could be an empty set. Finally, we fix
\begin{eqnarray}
	N&:=&\max\{\phantom{-}l(\underline{\gamma}):~\underline{\gamma}\in\mathcal{C}\cup\widetilde{\mathcal{C}}\}<\infty,\label{N}\\
	C&:=&\max\{-\phi(\underline{\gamma}):~\underline{\gamma}\in\mathcal{C}\cup\widetilde{\mathcal{C}}\}<\infty.\label{C}\\
	c&:=&C+\frac{2d}{7}>0,\label{c}
\end{eqnarray}
where $d>0$ was given in the Remark \ref{d}. The above construction leads us to the following proposition.
\begin{proposition}\label{claim} Let $N\in\mathbb{N}$, $C \geq 0$ and $c>0$ as (\ref{N}), (\ref{C}) and (\ref{c}) respectively. Then, for every $a,b\in I$ we have
	\begin{itemize}
		\item[i)]\label{fact1} There exists a loop $\underline{\gamma}\in \mathcal{P}(\Sigma)$ connecting $a$ to $b$ such that $\phi(\underline{\gamma})\geq-C$ and $l(\underline{\gamma})\leq N$;
		\item[ii)]\label{fact2} If there exists a loop $\underline{\gamma}$ connecting $a$ to $a$ and avoiding the set symbols of $J\subset I$, then $\phi(\underline{\gamma})\geq-C$ and $l(\underline{\gamma})\leq N$;
	\end{itemize}
	Also, each symbol of $I$ belongs to the symbols of
	the topologically transitive finite Markov shift $\Sigma_{-c}$, i.e., $\Sigma_I\subset \Sigma_{-c}$.
\end{proposition}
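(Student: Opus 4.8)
The plan is to treat the three assertions separately: the first two items reduce almost immediately to the definitions of the path sets $\mathcal{C}$, $\widetilde{\mathcal{C}}$ and of the constants $N$, $C$, whereas the inclusion $\Sigma_I\subset\Sigma_{-c}$ rests on a short sign argument exploiting that the normalized potential is non-positive.

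For the first item, given $a,b\in I$ I would take the loop $\underline{\gamma}=\underline{\gamma_a^b}\,\underline{\gamma_b^a}$, which by construction belongs to $\mathcal{C}$ and passes through both $a$ and $b$, these symbols appearing only at its endpoints. Since $\mathcal{C}\subset\mathcal{C}\cup\widetilde{\mathcal{C}}$, the definitions (\ref{N}) and (\ref{C}) give at once $l(\underline{\gamma})\leq N$ and $\phi(\underline{\gamma})\geq -C$. For the second item, if for some $J\subset I$ there exists a loop from $a$ to $a$ avoiding the symbols of $J$ in the middle, then the shortest such loop is exactly $\underline{\gamma_a^a}(J)\in\widetilde{\mathcal{C}}$ (the case $b=a$ in the definition of $\widetilde{\mathcal{C}}$); taking $\underline{\gamma}=\underline{\gamma_a^a}(J)$, the same two definitions yield $l(\underline{\gamma})\leq N$ and $\phi(\underline{\gamma})\geq -C$. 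I read the conditional in the second item as asserting precisely that this shortest $J$-avoiding loop realizes the bounds, since an arbitrary such loop could of course be long and very negative.

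The core of the inclusion is the observation that after the normalization made above $\phi\leq 0$, so every edge of a path contributes a non-positive amount to $\phi(\underline{\gamma})$. Concretely, if $\underline{\gamma}=x_0x_1\ldots x_n$ is a loop with $\phi(\underline{\gamma})=\sum_{m=0}^{n-1}\phi(x_mx_{m+1})\geq -C$ and each $\phi(x_mx_{m+1})\leq 0$, then isolating a single term gives $\phi(x_mx_{m+1})\geq\phi(\underline{\gamma})\geq -C$ for every $m$, because deleting the remaining non-positive summands only increases the value. Since $\phi$ is a Markov potential, $\phi(x_mx_{m+1})\leq\sup\phi|_{[x_m]}$, and therefore $\sup\phi|_{[x_m]}\geq -C>-c$, the last strict inequality holding because $c=C+\tfrac{2d}{7}>C$. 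Thus every symbol occurring in a potential-bounded loop lies in $\{i\in\mathbb{N}\colon\sup\phi|_{[i]}\geq -c\}$.

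Finally I would apply this to the good loops from the first item: fixing $a\in I$ and taking $b=a$ produces a loop through $a$ with $\phi(\underline{\gamma})\geq -C$, whence $\sup\phi|_{[a]}\geq -C>-c$ and $a\in\{i\colon\sup\phi|_{[i]}\geq -c\}$. As this holds for every $a\in I$, we obtain $I\subset\{i\colon\sup\phi|_{[i]}\geq -c\}$, which is contained in the alphabet of $\Sigma_{-c}$ by Definition \ref{Sigma-c}; since $\Sigma_I$ and $\Sigma_{-c}$ are both sub-Markov shifts of $\Sigma$ built from the same transition matrix $A$, the inclusion of alphabets upgrades to $\Sigma_I\subset\Sigma_{-c}$. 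The computations are routine, and the only point requiring care is verifying that the sign argument genuinely forces every intermediate symbol of such a loop into the alphabet of $\Sigma_{-c}$; the slack $\tfrac{2d}{7}>0$ plays no role here beyond ensuring $-c<-C$, its precise value becoming relevant only in the subsequent comparison estimates between the measures on $\Sigma$ and those on $\Sigma_{-c}$.
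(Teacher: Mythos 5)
Your proof is correct and follows essentially the same route as the paper's: items i) and ii) are read off directly from the definitions of $\mathcal{C}$, $\widetilde{\mathcal{C}}$, $N$ and $C$, and the inclusion $\Sigma_I\subset\Sigma_{-c}$ comes from the chain $-c+\tfrac{2d}{7}=-C\leq\phi(\underline{\gamma_a^b}\,\underline{\gamma_b^a})\leq\sup\phi|_{[a]}$, where the last inequality uses exactly your sign argument that the normalized potential is non-positive so all but the first edge term can be discarded. Your version is merely a more explicit (and slightly more general, since it bounds every symbol of a good loop) write-up of the paper's one-line argument.
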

\begin{proof}
	Note that items $i)$ and $ii)$ are a direct consequence of (\ref{N}), (\ref{C}) and (\ref{c}). Furthermore, fixing $i, j\in I$, it follows that
	\begin{eqnarray*}
		-c+\frac{2d}{7}&\leq&\phi(\underline{\gamma_i^j}\underline{\gamma_j^i})\leq\sup\phi|_{[i]},
	\end{eqnarray*}
	so, by Definition \ref{Sigma-c}, we have $\Sigma_I\subset \Sigma_{-c}$.
\end{proof}

The main difference here between the BIP case and the topologically transitive case is the following one: in the BIP case we are able to link $a$ and $b$ using only symbols of the finite set $\{b_1, b_2, ..., b_N\}$ and, thus, the length of the path is at most $N$. On the other hand, in the topologically transitive setting, we are only able to guarantee the existence of a finite path linking $a$ and $b$ but we do not have any control on the length of the paths. Despite this, the finiteness of the set $I$ make it possible to build a compact subshift as in proposition \ref{claim}.

Fix the topologically transitive finite Markov shift $\Sigma^{\prime}:=\Sigma_{-7c}$, note that $\Sigma_I\subset\Sigma_{-c}\subset\Sigma^{\prime}\subset \Sigma$. For the subshift $\Sigma^{\prime}$ we will denote by $\vt_{t}$ to the equilibrium state associated with the potential $t\phi$ restricted to $\Sigma^{\prime}$ (this will be denoted by $t\phi|_{\Sigma^{\prime}}$), and $Q(t\phi) \leq P_G(t \phi)$ is the topological pressure of the potential $t\phi|_{\Sigma^{\prime}}$.

For every $i,j\in\{a,b\}$ and $t\geq 1$, we define:
\begin{equation}\label{q_}
	q_{ij}^{t}:=\sum_{\{\underline{\g} :i\to j\in\mathcal{P}(\Sigma^{\prime})\}}\exp\left((t\phi-Q(t\phi))(\underline{\g})\right).
\end{equation}
where $\{\underline{\g}\colon i\to j\in\mathcal{P}(\Sigma^{\prime})\}$ is the set of paths in $\Sigma^{\prime}$ that connect $i$ to $j$ and such that do not have an intermediate occurrence of neither $a$ nor $b$. Similarly to (\ref{coc-mu}) we see that
\begin{equation}\label{coc-vt}
	\frac{\vt_{t}([b])}{\vt_{t}([a])}=\frac{1-q_{aa}^{t}}{1-q_{bb}^{t}},
\end{equation}
for each $a,b\in I$. Since $\Sigma'$ is a finite Markov shift, we have existence of the limit $\displaystyle\lim_{t\to\infty}\frac{\vt_{t}([b])}{\vt_{t}([a])}$ for any $a,b\in I$, see for details \cite{Br,CGU, Le}. 

Next, we will show the following equality
\begin{equation}\label{equ-import}
	\lim_{t\to\infty}\frac{\mu_t([b])}{\mu_t([a])}=\lim_{t\to\infty}\frac{\vt_{t}([b])}{\vt_{t}([a])},
\end{equation}
for each $a,b\in I$. Since, we have existence of the limit in the right side of the equation above (see for instance \cite{Br,CGU,Le}), by Proposition \ref{acum-max}, it follows that (\ref{equ-import}) guarantees the main theorem of this work (i.e., Theorem \ref{theo-main}).

So, by \eqref{coc-mu} and \eqref{coc-vt}, it is only necessary to prove the following
\begin{equation}\label{equ-import-p}
	\lim_{t\to\infty}\frac{1-p_{aa}^{t}}{1-p_{bb}^{t}}=\lim_{t\to\infty}\frac{1-q_{aa}^{t}}{1-q_{bb}^{t}}.
\end{equation}

To study the asymptotic behavior of $1-p_{aa}^{t}$ and $1-q_{aa}^{t}$ we will use item $i)$ of Proposition \ref{claim} to find their lower bounds. Now we will find a lower bound for $1-p_{aa}^{t}$. Fix $a,b\in I$ and $t\geq 1$,
by Proposition \ref{claim} the concatenation $\underline{\gamma_a^b\gamma_b^a}$ is a path from $a$ to $a$ passing though $b$ with length at most $N$ satisfying $\phi(\underline{\gamma_a^b\gamma_b^a}) \geq -C$, thus
\begin{eqnarray}
	\exp\left((t\phi-P_G(t\phi))(\underline{\gamma_a^b\gamma_b^a})\right)
	\geq\exp\left(-t C-NP_G(t\phi)\right).\label{B}
\end{eqnarray}
Therefore from (\ref{A}) and (\ref{B}), we obtain the lower bound
\begin{equation}\label{2beta}
	1-p_{aa}^{t}=p_{ab}^{t}p_{ba}^{t}\sum_{n\geq 1}(p_{bb}^{t})^n\geq\exp\left(-t C-NP_G(t\phi)\right).
\end{equation}
Obviously the same argument gives us lower bounds for $1-p_{bb}^{t}$, $1-q_{aa}^{t}$ and $1-q_{bb}^{t}$.

In order to continue analyzing the asymptotic behavior of $\frac{1-p_{aa}^{t}}{1-p_{bb}^{t}}$ and $\frac{1-q_{aa}^{t}}{1-q_{bb}^{t}}$, we introduce the terms 
\begin{equation}\label{r}
	r_{ij}^{t}=\sum_{\{\underline{\g}\colon i\to j\in\mathcal{P}(\Sigma^{\prime})\}}\exp(t\phi-P_G(t\phi))(\underline{\g}),
\end{equation}
for each $i,j\in\{a,b\}$. It can be observed that $r_{ij}^{t}\leq p_{ij}^{t}$ and $r_{ij}^{t}\leq q_{ij}^{t}$. Note that $r_{ij}^{t}$ is obtained by taking the sum only on the main paths in $\Sigma^{\prime}$,  while $p_{ij}^{t}$ considers all the main paths of $\Sigma$. Later, in Lemma \ref{lemma1-Kem} we will show that for each  $a\in I$ fixed, the value of  $r_{aa}^{t}$ is close to $p_{aa}^{t}$ and $q_{aa}^{t}$ for $t$ large enough.

The main tool to prove (\ref{equ-import-p}) is the Lemma \ref{lemma1-Kem}, in fact, the Lemmas \ref{af1} and \ref{af23} allow us to prove this lemma. The aforementioned results are the same as those obtained by T. Kempton for countable Markov shifts satisfying the BIP condition and the proofs are similar, those can be found in \cite{Ke}. This is due to Proposition \ref{claim} and because the Gurevich pressure on topologically transitive countable Markov shifts has a similar behavior to the one observed in the case of countable Markov shifts satisfying the BIP condition. This was proved in Proposition \ref{pres-decre}. 

For each main path $\underline{\g}\in\{\underline{\g}:a\to a\}$, we write $n(\underline{\g})$ to denote the number of times that a symbol of $I$ appears in  $\underline{\g}$ without taking into account the symbols that appear at the end. So, when $n(\underline{\g})=n$, these symbols of $I$ can be labeled as $i_0,i_1,i_2,\ldots,i_{n+1}$ with the convention $i_0=i_{n+1}=a$. We denote by $X_{aa}^n$ to all those main paths satisfying $n(\underline{\g})=n$. From the definition of a main path, we necessarily have $i_k \neq \{a,b\}$, $1 \leq k \leq n$. By calling $X_{aa}^n$ to the collection of main paths such that $n(\underline{\g})=n$, we get
\begin{equation}\label{splitpaa}
	p_{aa}^{t}=\sum_{n=0}^{\infty}p_{aa}^{t}(n),
\end{equation}
where
\begin{equation}\label{paan}
	p_{aa}^{t}(n)=\sum_{i_0,\ldots,i_{n+1}\in X_{aa}^n}\prod_{k=0}^{n}\left(\sum_{\{\underline{\g}:i_k\hookrightarrow i_{k+1}\in\mathcal{P}(\Sigma)\}}\exp(t\phi-P_G(t\phi))(\underline{\g})\right).
\end{equation}
Note that for every $n\geq 0$, the terms $p^t_{aa}(n)$ are of the form $\exp(t\phi-P_G(t\phi))(\underline{\g})$, where $\underline{\g}\in\mathcal{P}(\Sigma)$ such that $n(\underline{\g})=n$. The following lemma gives a lower bound for $p_{aa}^{t}(n)$, for every $a\in I$.

\begin{lema}\label{af1} For every $r\in\mathbb{N}_0$ and $r|I|\leq n < (r+1)|I|$, we have that $$p_{aa}^{t}(n)\leq \left(1-\exp(-Ct-NP_G(t\phi))\right)^{r}.$$
\end{lema}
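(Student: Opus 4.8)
The plan is to read \eqref{paan} as a constrained sum of first--return weights and to turn it into a hitting estimate for a finite Markov chain, where the pigeonhole principle forces a ``loop'' to close every $|I|$ symbols and each closed loop costs a definite factor. Write $\epsilon_t:=\exp(-Ct-NP_G(t\phi))$ and, for $i,j\in I$, abbreviate $w^t_{ij}:=\sum_{\{\underline{\g}:\,i\hookrightarrow j\in\mathcal P(\Sigma)\}}\exp\big((t\phi-P_G(t\phi))(\underline{\g})\big)$, so that by \eqref{paan} one has $p^t_{aa}(n)=\sum_{i_1,\dots,i_n\in I\setminus\{a,b\}}\prod_{k=0}^{n}w^t_{i_ki_{k+1}}$ with $i_0=i_{n+1}=a$. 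The starting observation is that, since the product runs over a \emph{loop} based at $a$, the eigenfunction factors coming from \eqref{med-mark} telescope to $1$; hence the normalized weights $P(i,j):=\frac{h(j)}{h(i)}w^t_{ij}$ may replace the $w^t_{ij}$ without changing $p^t_{aa}(n)$, and Remark \ref{remark-G} together with the positive recurrence of $t\phi$ (Proposition \ref{lema-posrecu}, which gives $\mu_t$--almost sure return to every symbol of $I$) shows $\sum_{j\in I}P(i,j)=1$. Thus $p^t_{aa}(n)$ is exactly the probability that the finite irreducible chain with transition matrix $P$ on $I$, started at $a$, avoids $\{a,b\}$ for the first $n$ steps and returns to $a$ at step $n+1$.

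The heart of the argument is the following $\epsilon_t$--estimate, valid for \emph{every} $c\in I$: the $P$--probability that the chain started at $c$ returns to $c$ before meeting $\{a,b\}$ is at most $1-\epsilon_t$. This is again a loop quantity, so it equals $\ell_c:=\sum\exp\big((t\phi-P_G(t\phi))(\underline{\g})\big)$ over first--return $\Sigma$--loops at $c$ avoiding $\{a,b\}$, with no surviving dependence on $h$. On one hand, the total weight of \emph{all} first--return loops at $c$ equals $1$ (this is \eqref{coc-loop} summed over first returns, using $\mu_t(\Sigma(c))=1$). On the other hand, by Proposition \ref{claim} and the definition of $\mathcal C$, the concatenation $\underline{\gamma_c^a}\,\underline{\gamma_a^c}\in\mathcal C$ is a first--return loop at $c$ that passes through $a$, has length at most $N$ and satisfies $\phi(\underline{\gamma_c^a}\,\underline{\gamma_a^c})\ge -C$; since $P_G(t\phi)\ge 0$ (Proposition \ref{pres-decre}), its weight is at least $\exp(-Ct-NP_G(t\phi))=\epsilon_t$. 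As this loop meets $\{a,b\}$ before returning to $c$, it lies in the complementary event, whence $\ell_c\le 1-\epsilon_t$.

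Finally I would extract $r$ such factors. Partition the first $r|I|$ interior symbols of a loop contributing to $p^t_{aa}(n)$ into $r$ consecutive blocks of length $|I|$; since the interior symbols lie in $I\setminus\{a,b\}$, a set of fewer than $|I|$ elements, each block forces a repetition and hence the closure of a first--return sub--loop at some $c_s$ avoiding $\{a,b\}$. Peeling these $r$ sub--loops off one at a time by the first--return (loop--erasure) decomposition, the weight factorizes into the weight of the residual loop times the weights of the excised sub--loops; summing the excised sub--loops freely against $\ell_{c_s}\le 1-\epsilon_t$ and bounding the residual by the total return weight $1$ gives $p^t_{aa}(n)\le(1-\epsilon_t)^r$. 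The hard part will be precisely this last bookkeeping: one must organize the peeling canonically — e.g.\ by invoking the strong Markov property at the successive first times the chain revisits an already--visited state, so that the $r$ excursions contribute \emph{independent} factors $1-\epsilon_t$ and no spurious combinatorial constant appears. The block length $|I|$ is exactly what the pigeonhole principle requires, and the fact that every estimate above is carried out on loops (where the factors $h$ cancel) is what makes the bound uniform in $t$.
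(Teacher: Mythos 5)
The paper itself does not write out a proof of this lemma (it defers to Kempton), so I am judging your argument on its own terms. Your first two steps are sound and are the right ideas: the telescoping of the eigenfunction ratios on loops, the identification of $p_{aa}^{t}(n)$ with a hitting probability for the chain induced on $I$, and the single-excursion estimate $\ell_c\le 1-\exp(-Ct-NP_G(t\phi))$ obtained by exhibiting the witness loop $\underline{\gamma_c^a}\,\underline{\gamma_a^c}\in\mathcal C$ are all correct (the last one is exactly the mechanism behind \eqref{2beta}, transported from $a$ to a general $c\in I$).

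The genuine gap is in the final step, and you have correctly located it but not closed it. The peeling/loop-erasure surgery does not yield the bound: to invert the map (path) $\mapsto$ (residual, excised sub-loops) you must also record the positions at which the sub-loops are re-inserted, and summing over those positions produces a combinatorial factor of order $\binom{n}{r}$ that destroys $(1-\epsilon_t)^r$. Your proposed repair via the strong Markov property also fails as stated, because the excised sub-loop begins at the time of the \emph{first} visit to the state that will later be repeated, and that time is not a stopping time (you only learn it is the base of a loop when the revisit occurs), so you cannot condition there. Moreover the quantity you actually bounded, $\ell_c=\mathbb{P}_c(\text{return to }c\text{ before hitting }\{a,b\})$, has an unbounded horizon and therefore cannot be fed into a conditioning at deterministic times. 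The fix is to strengthen the single-excursion estimate to a \emph{finite-horizon} one: for every $c\in I$, the induced chain started at $c$ hits $\{a,b\}$ within $|I|-1$ induced steps with probability at least $\epsilon_t$. This follows from the same witness loop once you observe that $\underline{\gamma_c^a}$, being a shortest connecting path, visits each symbol at most once and hence contains at most $|I|-2$ intermediate symbols of $I$, so on the cylinder $[\underline{\gamma_c^a}\,\underline{\gamma_a^c}]$ the induced chain reaches $a$ within $|I|-1$ steps. With $\sup_{c}\mathbb{P}_c\big(X_1,\dots,X_{|I|-1}\notin\{a,b\}\big)\le 1-\epsilon_t$ in hand, the ordinary Markov property applied at the deterministic times $(|I|-1), 2(|I|-1),\dots,r(|I|-1)\le n$ gives $p_{aa}^t(n)\le\mathbb{P}_a\big(X_1,\dots,X_n\notin\{a,b\}\big)\le(1-\epsilon_t)^r$ directly; no pigeonhole, no excision, and no bookkeeping are needed.
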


Similarly to \eqref{splitpaa}, \eqref{paan}  we now define $r_{aa}^{t}=\sum_{n=0}^{\infty}r_{aa}^{t}(n)$, where
$$r_{aa}^{t}(n):=\sum_{i_0,\ldots,i_{n+1}\in X_{aa}^n}\prod_{k=0}^{n}\left(\sum_{\{\underline{\g}:i_k\hookrightarrow i_{k+1}\in\mathcal{P}(\Sigma^{\prime})\}}\exp(t\phi-P_G(t\phi))(\underline{\g})\right),$$
By calling
$$\epsilon(n):=\frac{p_{aa}^{t}(n)}{r_{aa}^{t}(n)},$$
it can be checked that
$$0\leq p_{aa}^{t}-r_{aa}^{t}=\sum_{n=0}^{\infty}p_{aa}^{t}(n)\left(1-\frac{1}{\epsilon(n)}\right).$$
\begin{lema}\label{af23} There exists $T>0$ and $K_1$ such that  for all $t>T$
	\begin{itemize}
		\item[i)] For each $0 \leq n<|I|-1$ the following inequality holds,
		$$\epsilon(n)\leq 1+K_1\exp(-5Ct).$$
		\item[ii)] For each $r\geq 1$ and  $r|I|\leq n < (r+1)|I|$ the following statement is satisfied 
		$$\epsilon(n)\leq \left(1+K_1\exp(-5Ct)\right)^r.$$
	\end{itemize}
\end{lema}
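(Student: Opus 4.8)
The plan is to compare the two sums segment by segment. Both $p_{aa}^{t}(n)$ and $r_{aa}^{t}(n)$ run over the same collection of interior labels $i_0=a,i_1,\ldots,i_n,i_{n+1}=a$ (with $i_1,\ldots,i_n\in I\setminus\{a,b\}$), the only difference being that in
$$
P_k:=\sum_{\{\underline{\g}:i_k\hookrightarrow i_{k+1}\in\mathcal{P}(\Sigma)\}}\exp\left(t\phi-P_G(t\phi)\right)(\underline{\g})
$$
the inner sum is over paths in $\Sigma$, whereas the corresponding factor $R_k$ of $r_{aa}^{t}(n)$ is over paths in $\Sigma^{\prime}$. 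Since $\Sigma^{\prime}\subset\Sigma$, we have $R_k\leq P_k$ and hence $\epsilon(n)\geq 1$. I would reduce the lemma to a single uniform per-segment estimate: for $t$ large and every admissible pair $(i_k,i_{k+1})$,
$$
P_k\leq\left(1+K\exp(-5Ct)\right)R_k,
$$
with $K$ independent of $t$ and of the labels. Multiplying over the $n+1$ segments and summing over labels then gives $\epsilon(n)\leq(1+K\exp(-5Ct))^{n+1}$, from which both items will follow.

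To get a lower bound for $R_k$, note that whenever $P_k>0$ there is a path in $\Sigma$ from $i_k$ to $i_{k+1}$ with no intermediate symbol in $I$; its shortest representative is an element of $\widetilde{\mathcal{C}}$, so it satisfies $\phi(\underline{\g})\geq-C$ and $l(\underline{\g})\leq N$. This path cannot use a symbol outside $\Sigma^{\prime}$: any $j\notin\Sigma^{\prime}$ has $\sup\phi|_{[j]}<-7c$, so a transition out of such a $j$ would force $\phi(\underline{\g})<-7c<-C$ (recall $7c=7C+2d>C$), a contradiction. Hence the shortest path lies in $\mathcal{P}(\Sigma^{\prime})$ and, using $0\leq P_G(t\phi)\leq P_G(\phi)$ from Proposition \ref{pres-decre}, it already gives
$$
R_k\geq\exp\left(-Ct-NP_G(\phi)\right),
$$
a bound uniform in $t\geq1$.

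For the excess $P_k-R_k$, which is the sum over paths $i_k\hookrightarrow i_{k+1}$ that leave $\Sigma^{\prime}$, I would decompose each such path at its first symbol $j\notin\Sigma^{\prime}$ and factor out the transition leaving $j$, which has weight at most $\exp(t\sup\phi|_{[j]})$. Summing over the forbidden symbols and invoking summability,
$$
\sum_{j\notin\Sigma^{\prime}}\exp\left(t\sup\phi|_{[j]}\right)\leq\exp(-7c(t-1))\sum_{i}\exp\left(\sup\phi|_{[i]}\right)<\infty,
$$
while the prefix and suffix are bounded by transfer sums that stay finite and uniformly bounded in $t\geq1$ because $t\phi\leq\phi$ and $P_G(t\phi)\geq0$. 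This produces $P_k-R_k\leq K^{\prime}\exp(-7ct)$ with $K^{\prime}$ independent of $t$ and the labels. Combining with the lower bound for $R_k$ and using $7c-C=6C+2d>5C$ yields, for $t$ large,
$$
\frac{P_k}{R_k}\leq 1+K^{\prime}\exp\left(NP_G(\phi)\right)\exp\left(-(6C+2d)t\right)\leq 1+K\exp(-5Ct).
$$

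Finally I would pass from segments to the whole path. The per-segment bound multiplied over the $n+1$ segments gives, termwise and hence for the ratio of sums, $\epsilon(n)\leq(1+K\exp(-5Ct))^{n+1}$. For item (i), $n<|I|-1$ forces $n+1\leq|I|-1$, and once $\delta:=K\exp(-5Ct)$ is small the elementary estimate $(1+\delta)^{|I|-1}\leq 1+K_1\delta$ gives the claim. For item (ii), $r|I|\leq n<(r+1)|I|$ together with $r\geq1$ gives $n+1\leq(r+1)|I|\leq 2r|I|$, so $(1+\delta)^{n+1}\leq\left((1+\delta)^{2|I|}\right)^r\leq(1+K_1\delta)^r$; choosing $T$ so that these linearizations are valid and $K_1$ large enough to serve both items finishes the proof. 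The main obstacle is the excess estimate $P_k-R_k\leq K^{\prime}\exp(-7ct)$: one must control excursions out of $\Sigma^{\prime}$ of arbitrary length, possibly re-entering and leaving $\Sigma^{\prime}$ several times, summing over the infinitely many forbidden symbols by the summability of $\phi$ (and $\Var_1(\phi)<\infty$) while keeping $P_G(t\phi)$ matched so that the prefix and suffix sums remain uniformly bounded; the boundedness $0\leq P_G(t\phi)\leq P_G(\phi)$ is exactly what makes every constant above independent of $t$.
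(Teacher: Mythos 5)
Your overall architecture is the right one and matches the paper's: compare $p_{aa}^{t}(n)$ and $r_{aa}^{t}(n)$ segment by segment, bound each segment sum from below by the weight of a short path guaranteed by Proposition \ref{claim} (and note that this path stays inside $\Sigma^{\prime}$, so $R_k>0$ whenever $P_k>0$), bound the excess coming from excursions that leave $\Sigma^{\prime}$ by a quantity of order $\exp(-7Ct)$, and then linearize $(1+\delta)^{n+1}$ to get items $i)$ and $ii)$. The lower bound $R_k\geq\exp(-Ct-NP_G(t\phi))$ and the final combinatorics are fine.

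The gap is in the only hard step, the excess estimate $P_k-R_k\leq K^{\prime}\exp(-7ct)$, and the justification you offer for it does not work. You claim the prefix and suffix transfer sums are ``uniformly bounded in $t\geq 1$ because $t\phi\leq\phi$ and $P_G(t\phi)\geq 0$.'' If you drop the normalization $\exp(-l(\underline{\g})P_G(t\phi))$ and dominate by $\sum\exp(\phi(\underline{\g}))$, that sum runs over excursions of unbounded length outside $I$ and its convergence is governed by powers of $S:=\sum_{j\notin I}\exp(\sup\phi|_{[j]})$, which summability makes finite but by no means less than $1$; so the $t=1$ unnormalized sum may diverge. If instead you keep the normalization so as to invoke the finiteness of $p^{T}_{ij}$ at some fixed reference temperature $T$, the comparison between temperatures produces a factor $\exp\bigl(l(\underline{\g})(P_G(T\phi)-P_G(t\phi))\bigr)$ that grows exponentially in the excursion length, and $0\leq P_G(t\phi)\leq P_G(\phi)$ is not enough to control it. This is precisely what the paper's proof is built to handle: using $P_G(t\phi)\downarrow h_{max}$ one first chooses $T$ so that $P_G(T\phi)-P_G(t\phi)\leq d$ for $t\geq T+1$, and then the per-step decay $\phi(x_nx_{n+1})<-d$ along the excursion cancels the length-$m$ pressure mismatch: $m\,d-d(m-2)-7c=2d-7c=-7C$, giving $(t\phi-P_G(t\phi))(\underline{\g})\leq(T\phi-P_G(T\phi))(\underline{\g})-7C(t-T)$ uniformly over all excursions, after which $K:=\exp(7CT)\max_{i_k,i_{k+1}}\sum\exp\bigl((T\phi-P_G(T\phi))(\underline{\g})\bigr)$ is finite because $p^{T}_{i_ki_{k+1}}<\infty$. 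Your sketch acknowledges this as ``the main obstacle'' but does not supply the mechanism that resolves it; without the monotone convergence of $P_G(t\phi)$ to $h_{max}$ and the $2d$ versus $7c=7C+2d$ bookkeeping, the claimed bound $P_k-R_k\leq K^{\prime}\exp(-7ct)$ is not established.
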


\begin{proof}
	From Proposition \ref{pres-decre}, item $ii)$, and Theorem 2 from \cite{FV} the  Gurevich pressure $P_{G}(t\phi)$ decreases to $h_{max}$. So, there exists $T>0$ such that
	$$P_{G}(t\phi)\leq h_{max}+d,\qquad\mbox{for all }t\geq T+1,$$
	where $d$ was given in Remark \ref{d}, also
	$$h_{max} \leq P_G\left(t\phi\right),\qquad\mbox{for all }t\geq1.$$
	Therefore
	\begin{equation*}\label{dif-pre}
		-d\leq P_G\left(t\phi\right)-P_G(T\phi)<0,\qquad\mbox{for all }t\geq T+1,
	\end{equation*}
	thus the difference between the pressure $P_G\left(t\phi\right)$ and $P_G(T\phi)$ can be controlled, for $t\gg 0$. 
	Let $i_k,i_{k+1}\in I$ be arbitrary. Consider a path $\underline{\g}=i_kx_1 \ldots x_{m-1}i_{k+1}:i_k\hookrightarrow i_{k+1}\in \mathcal{P}(\Sigma\setminus\Sigma^{\prime})$,  i.e., $x_n \notin I$ for all $1 \leq n \leq m-1$ and at least one symbol $x_n$, $n=1,2,\ldots,m-1$ does not belongs to the alphabet associated to the finite Markov shift $\Sigma^{\prime}$. From Remark \ref{d} we have $\phi(i_k x_1)\leq 0$,  $\phi(x_n x_{n+1})<-d$ for $n=1,\ldots,m-2$ and $\phi(x_{m-1}i_{k+1})<-d$, because $x_n\notin I$ for $n=1,\ldots,m-1$. Moreover, since $\underline{\g}\in\mathcal{P}(\Sigma\setminus\Sigma^{\prime})$, there exists some $j\in\{1,2,\ldots,m-1\}$ such that $\phi(x_j x_{j+1})<-7c$ and consequently
	$$
	\phi(\underline{\g})=\phi(i_kx_1)+\phi(x_1 x_2)+\ldots+\phi(x_{m-1}i_{k+1}) \leq -d(m-2)-7c.
	$$
	In addition, from \eqref{dif-pre} we have   $P_{G}(T\phi)-P_{G}(t\phi) \leq d \leq (t-T)d$ for $t\geq T+1$, so that
	\begin{eqnarray}
		\left(t\phi-P_{G}(t\phi)\right)(\underline{\g})-\left(T\phi-P_{G}(T\phi)\right)(\underline{\g})&=&\left((t-T)\phi-P_{G}(t\phi)+P_{G}(T\phi)\right)(\underline{\g})\nonumber\\
		&=&m\left(P_{G}(T\phi)-P_{G}(t\phi)\right)+(t-T)\phi(\underline{\g})\nonumber\\
		&\leq&(t-T)md+(t-T)(-d(m-2)-7c)\nonumber\\
		&=&(t-T)(2d-7c)\nonumber\\
		&\leq&-7C(t-T)\label{finineq}.
	\end{eqnarray}
	We now fix the constant 
		\begin{equation}\label{K1}
			K:=\exp(7CT)\max_{i_k,i_{k+1}\in I}\sum_{\underline{\g}:i_k\hookrightarrow i_{k+1}\in\mathcal{P}(\Sigma\setminus\Sigma^{\prime})}\exp\left((T\phi-P_{G}(T\phi))(\underline{\g})\right),
		\end{equation}
		note that $K<\infty$, because $\sum_{\underline{\g}:i_k\hookrightarrow i_{k+1}\in\mathcal{P}(\Sigma\setminus\Sigma^{\prime})}\exp\left((T\phi-P_{G}(T\phi))(\underline{\g})\right)\leq p_{i_ki_{k+1}}^T<\infty$. 
		From \eqref{finineq} we have for $t\geq T+1$
		$$
		\left(t\phi-P_{G}(t\phi)\right)(\underline{\g}) \leq \left(T\phi-P_{G}(T\phi)\right)(\underline{\g}) -7C(t-T),
		$$
		so that
		\begin{eqnarray*}
			\sum_{\underline{\g}:i_k\hookrightarrow i_{k+1}\in\mathcal{P}(\Sigma\setminus\Sigma^{\prime})}\exp\left((t\phi-P_{G}(t\phi))(\underline{\g})\right)
			&\leq&\exp(-7Ct+7CT) \sum_{\underline{\g}:i_k\hookrightarrow i_{k+1}\in\mathcal{P}(\Sigma\setminus\Sigma^{\prime})}\exp\left(((T\phi-P_{G}(T\phi))(\underline{\g})\right)\\
			&\leq& \exp(-7Ct)K.
	\end{eqnarray*}
	
	The proof continues following the same steps as  \cite{Ke} to obtain items $i)$ and $ii)$.
\end{proof}

Due to Lemmas \ref{af1} and \ref{af23} the following lemma is obtained.

\begin{lema}\label{lemma1-Kem}
	There exists $T>0$ and $0<M<\infty$ such that for each pair $a,b\in I$ and for all $t>T$
	\begin{itemize}
		\item[i)] $p_{aa}^{t}\leq r_{aa}^{t}+ M\exp(-3Ct),$
		\item[ii)] $p_{ab}^{t}p_{ba}^{t}\leq r_{ab}^{t}r_{ba}^{t}+ M\exp(-3Ct).$
	\end{itemize}
\end{lema}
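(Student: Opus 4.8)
The plan is to establish item i) by controlling the series representation
$$0\leq p_{aa}^{t}-r_{aa}^{t}=\sum_{n=0}^{\infty}p_{aa}^{t}(n)\left(1-\frac{1}{\epsilon(n)}\right)$$
derived immediately before the statement, and then to deduce item ii) from the diagonal estimate by a telescoping argument. First I would bound each summand using $1-\frac{1}{\epsilon(n)}=\frac{\epsilon(n)-1}{\epsilon(n)}\leq\epsilon(n)-1$, which is valid because $\epsilon(n)\geq1$ (the paths counted by $p_{aa}^{t}(n)$ contain those counted by $r_{aa}^{t}(n)$). Then I would split the sum into blocks indexed by $r\geq0$, grouping the indices with $r|I|\leq n<(r+1)|I|$. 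On the block $r$, Lemma \ref{af1} gives $p_{aa}^{t}(n)\leq(1-\beta_t)^{r}$ with $\beta_t:=\exp(-Ct-NP_G(t\phi))$, while Lemma \ref{af23} gives $\epsilon(n)-1\leq(1+x_t)^{r}-1$ with $x_t:=K_1\exp(-5Ct)$ for $t>T$. Since each block has $|I|$ indices, the contribution of block $r\geq 1$ is at most $|I|\big((1-\beta_t)(1+x_t)\big)^{r}-|I|(1-\beta_t)^{r}$, and the block $r=0$ contributes at most $|I|\,x_t$ (using $\epsilon(n)-1\leq x_t$ directly there).

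Next, writing $\rho_1:=(1-\beta_t)(1+x_t)$ and $\rho_2:=1-\beta_t$, I would sum the resulting geometric series. The crucial input is that, by Proposition \ref{pres-decre}, $P_G(t\phi)$ decreases to $h_{max}$ and stays bounded between $h_{max}$ and $h_{max}+d$ for $t$ large, so that $\beta_t$ is comparable to $\exp(-Ct)$ while $x_t$ has the much faster rate $\exp(-5Ct)$; hence $x_t\ll\beta_t$, both $\rho_1,\rho_2<1$, and $1-\rho_1\geq\beta_t/2$. Telescoping then yields
$$p_{aa}^{t}-r_{aa}^{t}\leq|I|\,x_t+|I|\left(\frac{\rho_1}{1-\rho_1}-\frac{\rho_2}{1-\rho_2}\right)=|I|\,x_t+\frac{|I|(1-\beta_t)x_t}{(1-\rho_1)(1-\rho_2)}\leq|I|\,x_t\left(1+\frac{2}{\beta_t^{2}}\right).$$
Substituting $x_t=K_1\exp(-5Ct)$ together with $\beta_t^{-2}\leq\exp\big(2Ct+2N(h_{max}+d)\big)$ gives a bound of the form $M\exp(-3Ct)$, proving item i).

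For item ii) I would use the factorization
$$p_{ab}^{t}p_{ba}^{t}-r_{ab}^{t}r_{ba}^{t}=(p_{ab}^{t}-r_{ab}^{t})\,p_{ba}^{t}+r_{ab}^{t}\,(p_{ba}^{t}-r_{ba}^{t}).$$
Since each of $p_{ba}^{t}$ and $r_{ab}^{t}$ is bounded by $1$ (these are first-passage/return quantities for the stationary Markov measure, cf.\ the splitting (\ref{A})), it suffices to bound the two single differences $p_{ab}^{t}-r_{ab}^{t}$ and $p_{ba}^{t}-r_{ba}^{t}$. Each admits the same block decomposition as the diagonal case, now over main paths from $a$ to $b$ (respectively $b$ to $a$), with the interior $I$-symbols playing the role of the intermediate states; repeating the geometric-series estimate of item i) bounds each difference by a constant multiple of $\exp(-3Ct)$, and enlarging $M$ concludes.

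The main obstacle is the rate-matching in the second paragraph: one must verify that the fast decay $x_t\sim\exp(-5Ct)$ genuinely dominates the loss $\beta_t^{-2}\sim\exp(2Ct)$ so that the product lands at the asserted rate $\exp(-3Ct)$, and that $\beta_t$ is bounded below by a fixed multiple of $\exp(-Ct)$ uniformly in $t$ (this ensures both $x_t\ll\beta_t$ and $\rho_1<1$, hence convergence of the series over blocks). Both facts rest on the uniform confinement of $P_G(t\phi)$ to $[h_{max},h_{max}+d]$ supplied by Proposition \ref{pres-decre} and used in Lemma \ref{af23}; without it the block series need not converge and the estimate would fail.
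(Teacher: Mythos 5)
Your treatment of item i) is correct and is exactly the intended argument (the one the paper delegates to \cite{Ke}): the block decomposition over $r|I|\leq n<(r+1)|I|$, the bounds from Lemmas \ref{af1} and \ref{af23}, and the rate bookkeeping $\exp(-5Ct)\cdot\exp(2Ct)=\exp(-3Ct)$, using that $P_G(t\phi)$ is confined to $[h_{max},h_{max}+d]$ so that $\beta_t$ is comparable to $\exp(-Ct)$. That part is fine.

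Item ii), however, has a genuine gap: the assertion that $p_{ba}^{t}$ and $r_{ab}^{t}$ are bounded by $1$ is false in general. The paper only establishes $p_{ii}^{t}<1$ on the diagonal and controls the \emph{product} $p_{ab}^{t}p_{ba}^{t}=(1-p_{aa}^{t})(1-p_{bb}^{t})\leq 1$ via \eqref{A} and \eqref{b-a3}. For $a\neq b$ the quantity in \eqref{p_} is not a probability: from \eqref{med-mark} one gets $p_{ab}^{t}=\frac{h_t(a)}{h_t(b)}\,(1-p_{aa}^{t})$, where $h_t$ is the main eigenfunction of $L_{t\phi}$, and the ratio $h_t(a)/h_t(b)$ can grow exponentially in $t$ (already for a two-symbol shift with $\phi(ab)\neq\phi(ba)$ one finds $p_{ab}^{t}\asymp e^{ct}$ for some $c>0$). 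For the same reason the off-diagonal analogue of Lemma \ref{af1} does not read $p_{ab}^{t}(n)\leq(1-\beta_t)^{r}$ but carries the extra factor $h_t(a)/h_t(b)$, so "repeating the geometric-series estimate of item i)" does not bound $p_{ab}^{t}-r_{ab}^{t}$ by a constant multiple of $\exp(-3Ct)$; it bounds it by $\frac{h_t(a)}{h_t(b)}\cdot O(\exp(-3Ct))$, with an uncontrolled prefactor. Your telescoping identity is algebraically correct, but as written both the factor you multiply by and the single difference you estimate are unbounded.

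The way to close this (and the route behind \cite{Ke}) is to never separate the two off-diagonal factors: write
\begin{equation*}
p_{ab}^{t}p_{ba}^{t}-r_{ab}^{t}r_{ba}^{t}=\sum_{n,m\geq 0}p_{ab}^{t}(n)\,p_{ba}^{t}(m)\left(1-\frac{1}{\epsilon_{ab}(n)\,\epsilon_{ba}(m)}\right),
\end{equation*}
where $p_{ab}^{t}(n)$, $r_{ab}^{t}(n)$ and $\epsilon_{ab}(n)=p_{ab}^{t}(n)/r_{ab}^{t}(n)$ are the off-diagonal analogues of the quantities defined before Lemma \ref{af1}. The concatenation of a main path from $a$ to $b$ with one from $b$ to $a$ is a loop in $\Gamma(a)$, so the eigenfunction ratios cancel and the diagonal bound of Lemma \ref{af1} applies to the products $p_{ab}^{t}(n)p_{ba}^{t}(m)$ (with $r$ determined by $n+m$), while Lemma \ref{af23} controls each $\epsilon$ factor; the same block-and-geometric-series computation as in your item i) then yields the stated bound. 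With that replacement your argument is complete.
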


We write $a(t)\sim b(t)$, to express that
$$\lim\limits_{t\to\infty}\frac{a(t)}{b(t)}=1.$$
\noindent
As a consequence of the previous lemma, one can obtain that
\begin{eqnarray}\label{lemma2-Kem}
	1-p_{aa}^{t}\sim 1-r_{aa}^{t},\qquad 1-r_{aa}^{t}\sim 1-q_{aa}^{t},
\end{eqnarray}
for every $a\in I$,  see \cite{Ke} for complete details.

Finally, from  (\ref{lemma2-Kem}) we have that
$$\frac{\mu_t([b])}{\mu_t([a])}=\frac{1-p_{aa}^{t}}{1-p_{bb}^{t}}\sim\frac{1-r_{aa}^{t}}{1-r_{bb}^{t}}\sim\frac{1-q_{aa}^{t}}{1-q_{bb}^{t}}=\frac{\vt_{t}([b])}{\vt_{t}([a])}.$$
Therefore,
$$\lim_{t\to\infty}\frac{\mu_t([b])}{\mu_t([a])}=\lim_{t\to\infty}\frac{\vt_{t}([b])}{\vt_{t}([a])},$$
since that $\lim_{t\to\infty}\frac{\vt_{t}([b])}{\vt_{t}([a])}$ exists for all $a,b\in I$ we finally have that $\lim_{t\to\infty}\mu_t$ exists in the weak$^\star$ topology.

\section{Examples on the renewal shift}\label{Examples}

Throughout this section we present some examples where there is selection at zero temperature for the family of equilibrium states $(\mu_t)_{t\geq}$, i.e., where the limit $\lim_{t \to \infty}\mu_t$ exists in the weak$^\star$ topology. In fact, those examples are given in the context of the so called renewal shifts (to be defined below), when the potentials do not necessarily satisfy the conditions stated in Theorem \ref{theo-main}.

The {\it renewal shift} is the countable Markov shift whose transition matrix $\left(A(i,j)\right)_{\mathbb{N}\times \mathbb{N}}$ has entries $A(1,1),A(1,i)$ and $A(i,i-1)$ are equal to $1$ for every $i>1$, and the other entries are equal to $0$. Note that the renewal shift is topologically mixing and does not satisfy the BIP property. 

\begin{figure}[!htb]
	\begin{center}
		\begin{tikzpicture}[scale=1.5]
			\draw   (0,0) -- (7,0);
			\node[circle, draw=black, fill=white, inner sep=1pt,minimum size=5pt] (1) at (0,0) {1};
			\node[circle, draw=black, fill=white, inner sep=1pt,minimum size=5pt] (2) at (1,0) {2};
			\node[circle, draw=black, fill=white, inner sep=1pt,minimum size=5pt] (3) at (2,0) {3};
			\node[circle, draw=black, fill=white, inner sep=1pt,minimum size=5pt] (4) at (3,0) {4};
			\node[circle, draw=black, fill=white, inner sep=1pt,minimum size=5pt] (5) at (4,0) {5};
			\node[circle, draw=black, fill=white, inner sep=1pt,minimum size=5pt] (6) at (5,0) {6};
			\node[circle, draw=black, fill=white, inner sep=1pt,minimum size=5pt] (7) at (6,0) {7};
			\node (8) at (7,0) {};
			\draw[->, >=stealth] (2)  to (1);
			\draw[->, >=stealth] (3)  to (2);
			\draw[->, >=stealth] (4)  to (3);
			\draw[->, >=stealth] (5)  to (4);
			\draw[->, >=stealth] (6)  to (5);
			\draw[->, >=stealth] (7)  to (6);
			\draw[->, >=stealth] (8)  to (7);
			\node [minimum size=10pt,inner sep=0pt,outer sep=0pt] {} edge [in=200,out=100,loop, >=stealth] (0);
			\draw[->, >=stealth] (1)  to [out=90,in=90, looseness=1] (2);
			\draw[->, >=stealth] (1)  to [out=90,in=90, looseness=1] (3);
			\draw[->, >=stealth] (1)  to [out=90,in=90, looseness=1] (4);
			\draw[->, >=stealth] (1)  to [out=90,in=90, looseness=1] (5);
			\draw[->, >=stealth] (1)  to [out=90,in=90, looseness=1] (6);
			\draw[->, >=stealth] (1)  to [out=90,in=90, looseness=1] (7);
		\end{tikzpicture}
	\end{center}
	\caption{Renewal shift.}\label{fig:renewal}
\end{figure}
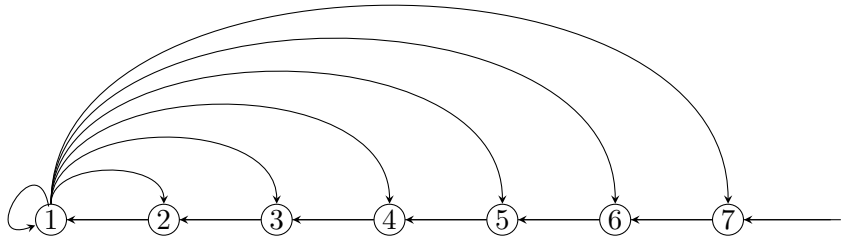

In this subsection we will present 2 examples of the zero temperature limit of equilibrium states on renewal shift. The Example \ref{exem0} is for the potential $\phi(x)=-x_0$, note that this potential satisfies the hypotheses of the Theorem \ref{theo-main}. On the other hand, the Example \ref{exem1} is for the potential $\phi(x)=x_0-x_1$, which is not a summable potential, however it has zero temperature limits for its associated equilibrium states.

In a renewal shift with $\phi:\Sigma\to \mathbb{R}$ a weakly H\"{o}lder continuous function such that $\sup \phi<\infty$, O. Sarig \cite{Sa2} showed that exists $t_c>0$ and a unique equilibrium state $\mu_t\in\mathcal{M}_{\sigma}(\Sigma)$ associated to the potential $t\phi$, for $t\in(0,t_c)$. In that same context, G. Iommi \cite{Io} showed that if $t_c=\infty$ then the set of $\phi$-maximizing $\mathcal{M}_{max}(\phi)\neq\emptyset$, otherwise there is no maximizing measure associated to potential $\phi$.

\begin{example}\label{exem0} Consider the renewal shift $\Sigma$ and a potential $\phi:\Sigma\to \mathbb{R}$ given by  $$\phi(x)={-x_0}.$$
	We will show that $\lim_{t\to\infty}\mu_{t}$ exists and is a maximizing measure, where $\mu_{t}$ is the equilibrium state associated to potential $t\phi$. Also, $\lim_{t\to\infty}\mu_{t}([a])=0$, for all $a\geq 2$.
\end{example}

Note that $\phi$ is a summable potential with  $\overline{\Var}(\phi)=0$. Also, 
$$P_G(t\phi)\leq \log(2)-t,\quad\mbox{for all }t\geq 1.$$
So, by Theorem 1 in \cite{FV} the family $(\mu_{t})_{t\geq 1}$ of equilibrium states associated to potential $t\phi$ have accumulation points. To verify the statement of the previous example we need the following  affirmations to hold:

\begin{afir}\label{afir-exe-3} For $a,n\in\mathbb{N}$, we have
	$$Z_n(t\phi,a)\leq\exp(nP_G(t\phi)).$$
\end{afir}
\begin{proof}
	Let $a,n\in\mathbb{N}$, it can be verified that $Z_{n}\left(t\phi,a\right)=\left(L_{t\phi}^{n}\mathbbm{1}_{[a]}\right)\left(x\right)$ for all $x\in[a]$. Let us integrate that expression with respect to $\nu_t$, which is the eigenmeasure of the dual of the Ruelle operator, i.e.,  $L_{t\phi}^{*}\nu_t=\exp(P_G(t\phi))\nu_t$. So, we obtain
	\begin{align*}
		\begin{aligned}
			Z_n(t\phi,a)&=\frac{1}{\nu_{t}[a]}\int_{[a]}L_{t\phi}^{n}\mathbbm{1}_{[a]}(x)\d\nu_{t}\\
			&\leq\frac{1}{\nu_{t}[a]}\int L_{t\phi}^{n}\mathbbm{1}_{[a]}(x)\d\nu_{t}\\
			&=\frac{\exp(nP_G(t\phi))}{\nu_{t}[a]}\int\mathbbm{1}_{[a]}(x)\d\nu_{t}=\exp(nP_G(t\phi)).
		\end{aligned} 				
	\end{align*}
\end{proof}

\begin{afir}\label{afir-exe-2}For any $a\in\mathbb{N}$, there is a constant $C=\exp\left(-\frac{(a-1)(a+2)}{2}\right)$ such that
	$$\left(L^n_{t\phi}\mathbbm{1}_{[1]}\right)(x)\leq C\cdot Z_{n+a-1}(t\phi,1), $$
	where $x\in[a]$.
\end{afir}
\begin{proof}
	Fix $x\in[a]$, we define the application bijective
	$$\begin{array}{crcl} \theta \ : & \! \ \{y\in[1]:~ \sigma ^{n}\left(y\right)=x\} & \!\longrightarrow & \! \{z\in[1]:~ \sigma ^{n+a-1}(z)=z,z_n=a\} \\ 
		&\! y=1,y_1,\ldots ,y_{n-1},x_0^{\infty} & \! \longmapsto & \! z=\overline{1,y_1,\ldots ,y_{n-1},a,(a-1),\ldots ,2}_{per}. \end{array}$$
	Note that for every $y\in\text{Dom}(\theta)$, $S_n\phi(y)-S_{n+a-1}\phi(\theta(y))=\displaystyle -\frac{(a-1)(a+2)}{2}$. So,
	\begin{align*}
		\begin{aligned}
			\left(L_{t\phi}^n\mathbbm{1}_{[1]}\right)(x)&=\exp\left(-\frac{(a-1)(a+2)}{2}t\right)\sum_{z\in\text{Im}(\theta)}\exp\left(t S_{n+a-1}\phi(z)\right)\\
			&\leq\exp\left(-\frac{(a-1)(a+2)}{2}t\right)\sum_{\sigma^{n+a-1}z=z}\exp\left(t S_{n+a-1}\phi(z)\right)\mathbbm{1}_{[1]}(z)\\
			&=\exp\left(-\frac{(a-1)(a+2)}{2}t\right)\cdot Z_{n+a-1}\left(t\phi,1\right).
		\end{aligned}
	\end{align*}
\end{proof}

\begin{afir}\label{afir-exe-4} For $a\in\mathbb{N}$, we have
	$$\nu_{t}([a])=\exp\left(-\frac{(a+2)(a-1)}{2}t-(a-1)P_G(t\phi)\right)\nu_{t}([1]).$$
\end{afir}
\begin{proof}
	Fixed $t\geq1$, by the Generalized Ruelle's Perron-Frobenius Theorem (see Theorem 4.9 in \cite{Sa3}), there is an eigenmeasure $\nu_{t}$ such that
	\begin{equation}\label{jod0}
		\nu_{t}\left(L_{\phi}f\right)=\lambda\nu_{t}\left( f\right),\qquad\mbox{para } f\in L^1(\nu_{t}).
	\end{equation}
	
	Let $a\geq 2$, consider $f:=\mathbbm{1}_{[a]}$, substituting in $(\ref{jod0})$ we have 
	\begin{align*} 				
		\begin{aligned}
			\exp(P_G(t\phi))\nu_{t}([a])&=\int\sum_{\sigma y=x}\exp({t\phi(y)})\mathbbm{1}_{[a]}(y)\d\nu_{t}(x)\\
			&=\int_{[a-1]}\sum_{\sigma y=x}\exp({t\phi(y)})\mathbbm{1}_{[a]}(y)\d\nu_{t}(x)\\
			&=\exp({-at})\nu_{t}([a-1]).
		\end{aligned} 				
	\end{align*}
	
	So, 
	\begin{equation}\label{rec-exe-imp}
		\nu_{t}([a])=\exp\left(-at-P_G(t\phi)\right)\nu_{t}([a-1]),
	\end{equation}
	using recursively (\ref{rec-exe-imp}) follows the statement.
\end{proof}

Note that the potential $t\phi$ is positive recurrent, for every $t\geq1$. Consider $a\geq 1$, then by the Generalized Ruelle's Perron-Frobenius Theorem,  we have
\begin{align*}
	\begin{aligned}
		\frac{\mu_{t}([a])}{\nu_{t}([a])}&=h(x),\quad \forall x\in[a]\\
		&=\frac{1}{\nu_{t}([1])}\lim_{n\to\infty}\exp(-nP_{G}(t\phi))\left(L^n_{t\phi}\mathbbm{1}_{[1]}\right)(x)\\
		&\leq \frac{\exp\left(-\frac{(a-1)(a+2)}{2}t\right)}{\nu_{t}([1])}\exp((a-1)P_{G}(t\phi)),
	\end{aligned}
\end{align*}
where in the third line the Affirmations were used \ref{afir-exe-3}  and \ref{afir-exe-2}. Later, by Affirmation \ref{afir-exe-4} we have 
\begin{align*}
	\begin{aligned}
		\mu_{t}([a])&\leq\exp\left(-{(a+2)(a-1)}t\right).
	\end{aligned}
\end{align*}
Therefore
\begin{equation}\label{a=0}
	\lim\limits_{t\to\infty}\mu_{t}([a])=0,\qquad\forall a\geq 2.  
\end{equation}
We will show that $\left(\mu_{t}\right)_{t\geq1}$ has only one accumulation point and this is $\delta_{\overline{1}}\in\mathcal{M}_{\sigma}(\Sigma)$, where this measure is the one supported at the point $\overline{1}=111\ldots 1\ldots\in\Sigma$. Consider $\mu_{\infty}\in\mathcal{M}_{\sigma}(\Sigma)$ an arbitrary accumulation point of $\left(\mu_{t}\right)_{t\geq1}$, note that it is enough to show that $\mu_{\infty}=\delta_{\overline{1}}$. From (\ref{a=0}) we have that $\mu_{\infty}([a])=0$, for all $a\geq 2$ and hence
$\mu_{\infty}([1])=1$. Since the measure $\mu_{\infty}$ is $\sigma$-invariant then
$$\mu_{\infty}([1])=\mu_{\infty}([11])+\mu_{\infty}([21]),$$
so, from (\ref{a=0}) and the fact that $[1]=\cup_{i\geq 1}[1i]$ we have that $\mu_{\infty}([1a])=0$, for all $a\geq 2$. Similarly we obtain that
\begin{equation}\label{111a}
	\mu_{\infty}([11\ldots 1a])=0,\qquad\mbox{for all }a\geq 2.  
\end{equation}
So, since $\mu_{\infty}([1])=1$ we have 
\begin{equation}\label{1111}
	\mu_{\infty}([1\ldots1])=1,    
\end{equation}
where $1\ldots1$ is a word of arbitrary size composed only of the symbol one. To show that $\mu_{\infty}=\delta_{\overline{1}}$ it suffices to show that
\begin{equation}\label{igual-delta}
	\mu_{\infty}([\underline{\omega}])=\delta_{\overline{1}}([\underline{\omega}]),\qquad \mbox{for all } \underline{\omega}\in\mathcal{W}.  
\end{equation}
Note that for every $a\geq 2$ and $\underline{\omega}\in\mathcal{W}$ such that $[\underline{\omega}]\cap [a]\neq\emptyset$ we have that \eqref{igual-delta} is satisfied. Also, from (\ref{111a}) and \eqref{1111}, for every $[\underline{\omega}]\cap [1]\neq\emptyset$, we have that \eqref{igual-delta} is satisfied. Therefore $\lim_{t\to\infty}\mu_{t}=\delta_{\overline{1}}$. Also, the Proposition \ref{acum-max} ensures that $\delta_{\overline{1}}\in \mathcal{M}_{max}(\phi)$.

\begin{example}\label{exem1} Consider the renewal shift $\Sigma$ and the potential $\phi:\Sigma\to \mathbb{R}$ given by  $$\phi(x)={x_0-x_1}.$$
	We will show that $\lim_{t\to\infty}\mu_{t}$ exists and it is a $\phi$-maximizing measure, where $\mu_{t}\in\mathcal{M}_{\sigma}(\Sigma)$ is the equilibrium state associated to potential $t\phi$, for every $t\geq 1$. Also, $\lim_{t\to\infty}\mu_{t}([a])>0$, for every $a\in\mathbb{N}$.
\end{example}

Note that $\phi$ be a weakly H\"{o}lder continuous potential with $\Var_1(\phi)=+\infty$ and $\sup\phi<\infty$. Also $t_c=\infty$, next, by Theorem 5 in \cite{Sa2} we know that it exists the equilibrium state $\mu_{t}\in\mathcal{M}_{\sigma}(\Sigma)$ associated with the potential $t\phi$, for every $t\geq1$, and hence $\mathcal{M}_{max}(\phi)\neq\emptyset$, see Theorem 1.1 in \cite{Io}. Fix $t\geq1$, and notice that the Gurevich pressure of $P_G(t\phi)$ is constant, $P_G(t\phi)=\log2$ (see \cite{BBE}). Also, $\alpha(\phi)=0$,
and $|\mathcal{M}_{max}(\phi)|=\infty$, because of the fact that for every $x\in Per(\Sigma)$ such that $\sigma^nx=x$ we have $S_n\phi(x)=0$.

On the other hand, let $\underline{\omega}\in\mathcal{W}_m$, such that $\underline{\omega}\subset[a]$ then
\begin{eqnarray}
	\mu_{t}([\underline{\omega}])&=&h_{t}(x)\nu_{t}([\underline{\omega}]),\qquad x\in[a]\nonumber\\
	&=&\lim_{n\to\infty}\frac{1}{2^n}L_{t\phi}^n\mathbbm{1}_{[\underline{\omega}]}(x)\nonumber\\
	&=&\lim_{n\to\infty}\frac{1}{2^n}\left\{y\in[\underline{\omega}]:~\sigma^ny=x\right\}.\label{omega=a}
\end{eqnarray}
So, for any cylinder $\underline{\omega}$ we have that $\mu_{t}([\underline{\omega}])$ does not depend on $t$. Since $\underline{\omega}$ was arbitrary, we have that $(\mu_{t})_{t\geq1}$ is a singleton which we will denote by $\mu_{\infty}\in\mathcal{M}_{\sigma}(\Sigma)$. By Proposition \ref{pres-decre}, item $iii)$, $h(\mu_{\infty})=\log 2$. So, by the variational principle we have
$$P_G(t\phi)=h(\mu_{\infty})+\mu_{\infty}(t\phi),$$
thus $\mu_{\infty}(\phi)=0$. Therefore, $\mu_{\infty}$ is a $\phi$-maximizing measure. If consider $\underline{\omega}=a$ in (\ref{omega=a}) we have $\mu_{\infty}([a])=\frac{1}{2^a}$.

Note that from Example \ref{exem1} we have the existence of the zero temperature limit of equilibrium states in more general conditions than Theorem \ref{theo-main}.

\section*{Acknowledgements}
\addcontentsline{toc}{section}{Acknowledgements}

VV would to thank to the Foundation for Science and Technology (FCT) Project UIDP/00144/2020 and the Francisco Jos\'e de Caldas Fund (FFJC) Process 80740-628-2020 by the financial support during part of the development of this paper. CM thanks CONACYT Mexico for financial support through Ciencia de Frontera (Ciencia B\'{a}sica) Project No. A1-S-15528. JL and EB would to thank to the fellow program Fondo Postdoctorado Universidad Cat\'olica del Norte No 0001, 2020.


\begin{thebibliography}{{Kem}11}

\bibitem[BBE21]{BBE}
E.~R {Beltr{\'a}n}, R. {Bissacot}, and E.~O {Endo}.
\newblock Infinite dlr measures and volume-type phase transitions on countable
  markov shifts.
\newblock {\em Nonlinearity}, 34(7):4819, 2021.

\bibitem[BF14]{BF}
R.~{Bissacot} and R.~{Freire}.
\newblock {On the existence of maximizing measures for irreducible countable
  Markov shifts: a dynamical proof}.
\newblock {\em {Ergodic Theory Dyn. Syst.}}, 34(4):1103--1115, 2014.

\bibitem[BLL13]{BLL13}
A.~{Baraviera}, R.~{Leplaideur}, and A.~O. {Lopes}.
\newblock {\em Ergodic optimization, zero temperature limits and the max-plus
  algebra. Paper from the 29th Brazilian mathematics colloquium -- 29\(^{\text
  o}\) Col\'oquio Brasileiro de Matem\'atica, Rio de Janeiro, Brazil, July 22
  -- August 2, 2013}.
\newblock Rio de Janeiro: Instituto Nacional de Matem\'atica Pura e Aplicada
  (IMPA), 2013.

\bibitem[BMP16]{BM}
R.~{Bissacot}, J.~{Mengue}, and E.~{P{\'e}rez}.
\newblock A large deviation principle for gibbs states on markov shifts at zero
  temperature.
\newblock 2016.
\newblock arXiv:1612.05831.

\bibitem[{Bow}75]{Bow75}
R.~{Bowen}.
\newblock {\em {Equilibrium states and the ergodic theory of Anosov
  diffeomorphisms}}, volume 470.
\newblock Springer, Cham, 1975.

\bibitem[Br{\'e}03]{Br}
J.~{Br{\'e}mont}.
\newblock Gibbs measures at temperature zero.
\newblock {\em Nonlinearity}, 16(2):419--426, 2003.

\bibitem[BS03]{BS}
J.~{Buzzi} and O.~{Sarig}.
\newblock {Uniqueness of equilibrium measures for countable Markov shifts and
  multidimensional piecewise expanding maps}.
\newblock {\em {Ergodic Theory Dyn. Syst.}}, 23(5):1383--1400, 2003.

\bibitem[CF90]{Coe90}
Z.~{Coelho-Filho}.
\newblock {\em Entropy and ergodicity of skew-products over subshifts of finite
  type and central limit asymptotics}.
\newblock PhD thesis, University of Warwick, 1990.

\bibitem[CGU11]{CGU}
J.-R. {Chazottes}, J.-M. {Gambaudo}, and E.~{Ugalde}.
\newblock {Zero-temperature limit of one-dimensional Gibbs states via
  renormalization: the case of locally constant potentials}.
\newblock {\em {Ergodic Theory Dyn. Syst.}}, 31(4):1109--1161, 2011.

\bibitem[CH10]{CH}
J.-R. {Chazottes} and M.~{Hochman}.
\newblock {On the zero-temperature limit of Gibbs states}.
\newblock {\em {Commun. Math. Phys.}}, 297(1):265--281, 2010.

\bibitem[{Cyr}11]{Cyr}
V.~{Cyr}.
\newblock {Countable Markov shifts with transient potentials}.
\newblock {\em {Proc. London Math. Soc.}}, 103(3):923--949, 2011.

\bibitem[FV18]{FV}
R.~{Freire} and V.~{Vargas}.
\newblock {Equilibrium states and zero temperature limit on topologically
  transitive countable Markov shifts}.
\newblock {\em {Trans. Am. Math. Soc.}}, 370(12):8451--8465, 2018.

\bibitem[{Gur}84]{Gu}
B.~M. {Gurevich}.
\newblock {A variational characterization of one-dimensional countable state
  Gibbs random fields}.
\newblock {\em {Z. Wahrscheinlichkeitstheor. Verw. Geb.}}, 68:205--242, 1984.

\bibitem[{Iom}07]{Io}
G.~{Iommi}.
\newblock {Ergodic optimization for renewal type shifts}.
\newblock {\em {Monatsh. Math.}}, 150(2):91--95, 2007.

\bibitem[JMU05]{JMU1}
O.~{Jenkinson}, R.~D. {Mauldin}, and M.~{Urba\'nski}.
\newblock {Zero temperature limits of Gibbs-equilibrium states for countable
  alphabet subshifts of finite type}.
\newblock {\em {J. Stat. Phys.}}, 119(3-4):765--776, 2005.

\bibitem[JMU06]{JMU2}
O.~{Jenkinson}, R.~D. {Mauldin}, and M.~{Urba\'nski}.
\newblock {Ergodic optimization for countable alphabet subshifts of finite
  type}.
\newblock {\em {Ergodic Theory Dyn. Syst.}}, 26(6):1791--1803, 2006.

\bibitem[{Kem}11]{Ke}
T.~{Kempton}.
\newblock {Zero temperature limits of Gibbs equilibrium states for countable
  Markov shifts}.
\newblock {\em {J. Stat. Phys.}}, 143(4):795--806, 2011.

\bibitem[{Lep}05]{Le}
R.~{Leplaideur}.
\newblock {A dynamical proof for the convergence of Gibbs measures at
  temperature zero}.
\newblock {\em {Nonlinearity}}, 18(6):2847--2880, 2005.

\bibitem[LV21]{LV21}
A.~O. {Lopes} and V.~{Vargas}.
\newblock Entropy, pressure, ground states and calibrated sub-actions for
  linear dynamics.
\newblock 2021.
\newblock arXiv:2105.00078.

\bibitem[{Mor}07]{Mo}
I.~D. {Morris}.
\newblock {Entropy for zero-temperature limits of Gibbs-equilibrium states for
  countable-alphabet subshifts of finite type}.
\newblock {\em {J. Stat. Phys.}}, 126(2):315--324, 2007.

\bibitem[MU01]{MU}
R.~D. {Mauldin} and M.~{Urba\'nski}.
\newblock {Gibbs states on the symbolic space over an infinite alphabet}.
\newblock {\em {Isr. J. Math.}}, 125:93--130, 2001.

\bibitem[{Rue}68]{Ru}
D.~{Ruelle}.
\newblock {Statistical mechanics of a one-dimensional lattice gas}.
\newblock {\em {Commun. Math. Phys.}}, 9:267--278, 1968.

\bibitem[{Sar}99]{Sa1}
O.~M. {Sarig}.
\newblock {Thermodynamic formalism for countable Markov shifts}.
\newblock {\em {Ergodic Theory Dyn. Syst.}}, 19(6):1565--1593, 1999.

\bibitem[{Sar}01]{Sa2}
O.~M. {Sarig}.
\newblock {Phase transitions for countable Markov shifts}.
\newblock {\em {Commun. Math. Phys.}}, 217(3):555--577, 2001.

\bibitem[Sar09]{Sa3}
O.~M. {Sarig}.
\newblock Lecture notes on thermodynamic formalism for topological markov
  shifts.
\newblock {\em Penn State}, 2009.

\bibitem[{Shw}19]{Sh}
O.~{Shwartz}.
\newblock {Thermodynamic formalism for transient potential functions}.
\newblock {\em {Commun. Math. Phys.}}, 366(2):737--779, 2019.

\bibitem[SV22]{SV}
R.~R. {Souza} and V.~{Vargas}.
\newblock Existence of gibbs states and maximizing measures on a general
  one-dimensional lattice system with markovian structure.
\newblock {\em Qual. Theory Dyn. Syst.}, 21(1):5, 2022.

\bibitem[vR07]{vER}
A.~C.~D. {van Enter} and W.~M. {Ruszel}.
\newblock {Chaotic temperature dependence at zero temperature}.
\newblock {\em {J. Stat. Phys.}}, 127(3):567--573, 2007.

\end{thebibliography}
\end{document}